\newtheorem{theorem}{Theorem}[section] 
\newtheorem{lemma}[theorem]{Lemma}     
\newtheorem{corollary}[theorem]{Corollary}
\newcommand{\Z}{\mathbb{Z}}
\newcommand{\Hy}{\mathbb{H}}
\newcommand{\SR}{\mathcal S\cup\mathcal R}
\newcommand{\btu}{\bigtriangleup}
\newcommand{\lan}{\langle}
\newcommand{\ran}{\rangle}
\newcommand{\ord}{\text{ord}}
\title[spinor regular ternary quadratic forms]
 {Completeness of the list of spinor regular ternary quadratic forms} 
\author{A. G. Earnest}
\address{A.G. Earnest, Department of Mathematics, Southern Illinois University, Carbondale, IL 62901, USA}
\email{haenscha@duq.edu}
\author{Anna Haensch}
\address{Anna Haensch, Department of Mathematics and Computer Science, Duquesne University, Pittsburgh, PA 15282, USA.}
\email{haenscha@duq.edu}
\dedicatory{Dedicated to the memory of Professor Irving Kaplansky.}
\subjclass[2010]{11E20, 11E12}
\thanks{The research of the second author was supported by an Association for Women in Mathematics Mentoring Travel Grant.}
\begin{document}
\maketitle

\begin{abstract}
Extending the notion of regularity introduced by Dickson in 1939, a positive definite ternary integral quadratic form is said to be spinor regular if it represents all the positive integers represented by its spinor genus (that is, all positive integers represented by any form in its spinor genus). Jagy conducted an extensive computer search for primitive ternary quadratic forms that are spinor regular, but not regular, resulting in a list of 29 such forms. In this paper, we will prove that there are no additional forms with this property.
\end{abstract}
 
\section{Introduction} 
 
The objects of study in this paper will be positive definite ternary primitive integral quadratic forms, which for convenience will be referred to simply as ternary quadratic forms, or ternaries, throughout the paper. The systematic study of spinor regular ternary quadratic forms was initiated by Benham, Earnest, Hsia and Hung in 1990 \cite{BEHH}. In that paper, the authors prove that there exist only finitely many isometry classes of forms with this property, and determine all of them for which the discriminant does not exceed 1000 and the genus splits into multiple spinor genera, producing a list of eleven forms that are spinor regular, but not regular, in that range. Following the completion of the determination of all possible regular ternary quadratic forms by Jagy, Kaplansky and Schiemann \cite{JKS} (the regularity of all forms on their list has now been essentially confirmed; the reader is referred to papers of Lemke Oliver \cite{LO} and Oh \cite{Oh} for recent developments), Kaplansky suggested to the first author of the present paper in 1997 that ``The time may be ripe to find all spinor regular (positive ternary) forms". The present paper brings that vision to fruition, providing the final step in the completion of that project.

Subsequent to the work in \cite{BEHH}, the search for additional spinor regular ternaries with discriminants larger than 1000 was pushed forward by Jagy, using an extensive computer search that ultimately produced a list of 29 such forms, the largest discriminant of which is $87,808=2^8\cdot 7^3$. This list was posted to the Catalogue of Lattices in May 2004 and is reproduced here in Table 1 for ease of reference. As of the time of posting, Jagy's exhaustive search had determined that no further spinor regular ternaries exist with discriminant less than 575,000. The remainder of this paper will be devoted to a proof that the list in Table 1 is in fact complete.

\begin{table}[!htb]
\begin{tabular}{l|c|c}
\hline
\text{$disc(f)$} & \text{Coefficients of $f$} & \text{Size of Spinor Genus}\\
\hline
 $64=2^6 $& [2, 2, 5, 2, 2, 0] & 1\\
       $108=2^2 \cdot 3^3 $& [3, 3, 4, 0, 0, 3] & 1\\
       $108= 2^2 \cdot 3^3$& [3, 4, 4, 4, 3, 3] & 1\\
       $128=2^7 $& [1, 4, 9, 4, 0, 0] & 1\\
       $256=2^8$& [2, 5, 8, 4, 0, 2] & 1\\
       $256=2^8$& [4, 4, 5, 0, 4, 0] & 1\\
       $324=2^2\cdot 3^4$& [1, 7, 12, 0, 0, 1] & 1\\
       $343=7^3$& [2, 7, 8, 7, 1, 0] & 1\\
       $432=2^4\cdot 3^3$& [3, 7, 7, 5, 3, 3]  & 2\\
       $432=2^4\cdot 3^3$& [4, 4, 9, 0, 0, 4] & 1\\
       $432=2^4\cdot 3^3$& [3, 4, 9, 0, 0, 0] & 1\\
       $1024=2^{10}$& [4, 9, 9, 2, 4, 4] & 1\\
       $1024=2^{10}$& [4, 5, 13, 2, 0, 0] & 1\\
       $1024=2^{10}$& [5, 8, 8, 0, 4, 4] & 1\\
       $1372=2^2\cdot 7^3$& [7, 8, 9, 6, 7, 0] & 1\\
       $1728=2^6\cdot 3^3$& [4, 9, 12, 0, 0, 0] & 1\\
       $2048=2^{11}$& [4, 8, 17, 0, 4, 0] & 1\\
       $3888=2^4\cdot 3^5$& [4, 9, 28, 0, 4, 0] & 1\\
       $4096=2^{12}$& [9, 9, 16, 8, 8, 2] & 1\\
       $4096=2^{12}$& [4, 9, 32, 0, 0, 4] & 1\\
       $4096=2^{12}$& [5, 13, 16, 0, 0, 2] & 1\\
       $5488=2^4\cdot 7^3$& [8, 9, 25, 2, 4, 8] & 1\\
       $6912=2^8\cdot 3^3$& [9, 16, 16, 16, 0, 0] & 1\\
       $6912=2^8\cdot 3^3$& [13, 13, 16, -8, 8, 10] & 1\\
       $16384=2^{14}$& [9, 17, 32, -8, 8, 6] & 1\\
       $16384=2^{14}$& [9, 16, 36, 16, 4, 8] & 1\\
       $27648=2^{10}\cdot 3^3$& [9, 16, 48, 0, 0, 0]  & 2\\
       $62208=2^8\cdot 3^5$& [9, 16, 112, 16, 0, 0] & 1\\
       $87808=2^8\cdot 7^3$& [29, 32, 36, 32, 12, 24] & 1\\
       \hline
\end{tabular}
\caption{Jagy's table of forms which are spinor regular but not regular. The sextuple $[a,b,c,d,e,g]$ corresponds to the ternary form $f(x,y,z)=ax^2+by^2+cz^2+dyz+exz+gxy$.}\label{Tab:1}
\end{table}

\begin{theorem}\label{main} Every spinor regular ternary quadratic form that is not regular appears in Table 1. \end{theorem}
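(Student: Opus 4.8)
The plan is to reduce Theorem \ref{main} to the finite computation already carried out by Jagy, by proving that any spinor regular ternary that fails to be regular must have discriminant in the range Jagy searched exhaustively (below $575{,}000$); in fact I expect to show that its discriminant is divisible only by the primes $2$, $3$ and $7$, with exponents bounded small enough to land in that range. The starting observation is definitional: if $f$ is spinor regular but not regular, then some positive integer $n$ is represented by $\mathrm{gen}(f)$ but not by $f$ itself, and since $f$ represents everything in $\mathrm{spn}(f)$, this $n$ cannot be represented by $\mathrm{spn}(f)$ either. Hence $n$ is a \emph{spinor exceptional integer} for $\mathrm{gen}(f)$ that is genuinely represented by the genus, and in particular $\mathrm{gen}(f)$ must split into more than one spinor genus. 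So the first task is to classify those ternary genera that simultaneously (i) split into several spinor genera and (ii) carry represented spinor exceptional integers.

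First I would carry out the local spinor-norm analysis. Using the Earnest--Hsia formulas for the spinor norm groups $\theta(O^+(L_p))$ of local integral rotations of a ternary lattice, I would identify the \emph{relevant primes} --- those $p$ at which $\theta(O^+(L_p))$ fails to exhaust $\mathbb{Q}_p^\times/(\mathbb{Q}_p^\times)^2$ --- since these are exactly the primes that can contribute to the spinor-genus splitting. The number of spinor genera is then the index $[J_{\mathbb{Q}} : \mathbb{Q}^\times J_L]$, where $J_L$ is built from these local groups, and this index must be at least $2$. The existence of a represented spinor exceptional integer is a much stronger demand: translating it through the reciprocity map of class field theory, the spinor exceptional integers occupy a single square class attached to a quadratic field $E/\mathbb{Q}$, and the relevant primes must be precisely those that ramify in $E$. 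Matching the local conditions at each relevant $p$ against the ramification in $E$ is what I expect will force the prime factorization of $d=\mathrm{disc}(f)$ to involve only $2$, $3$ and $7$.

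With the set of admissible primes pinned down, the next step is to bound the exponents to which they can occur. Here I would push the local computations further: at each of $p=3,7$ the Jordan splitting of $L_p$ compatible with conditions (i)--(ii) can take only finitely many shapes, capping $\mathrm{ord}_p(d)$, and the dyadic constraints cap $\mathrm{ord}_2(d)$. This should yield an explicit upper bound on $d$ comfortably below $575{,}000$. A finite list of candidate genera then remains, and for each I would verify directly --- using an effective version of the representation theorem for spinor genera (Duke--Schulze-Pillot), so that only integers up to an effective bound need be checked --- whether the form is spinor regular, confirming that exactly the $29$ entries of Table 1 survive.

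The hard part will be the analysis at the prime $2$. The dyadic spinor-norm groups depend intricately on the norm, scale and unimodular Jordan components of $L_2$, producing many sub-cases, and it is precisely here that the bound $\mathrm{ord}_2(d)\le 14$ (witnessed by the discriminants $2^{14}$ in the table) must be extracted. Controlling how the dyadic Jordan structure interacts with both the splitting condition (i) and the stronger "represented spinor exceptional integer" condition (ii) --- while keeping the resulting discriminant small enough to fall within Jagy's verified range --- is the crux on which the completeness of Table 1 ultimately rests.
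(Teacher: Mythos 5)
The central gap is your claim that pushing the local spinor-norm computations at the admissible primes ``should yield an explicit upper bound on $d$ comfortably below $575{,}000$.'' This is precisely the program that Chan and Earnest \cite{CE} already carried out, and it falls far short of such a bound: the purely local analysis --- relevant primes must ramify in the quadratic field $E$ attached to a represented spinor exceptional integer, the Jordan splittings of $L_p$ must be compatible with splitting into several spinor genera, dyadic case analysis, etc. --- only caps the power of $2$ dividing a candidate discriminant at $2^{28}$ and the power of $3$ at $3^{13}$. These conditions constrain the \emph{shapes} of the local Jordan splittings but are nowhere near strong enough to force the exponents down into Jagy's verified range, so your step from ``finitely many admissible local structures'' to ``discriminant below $575{,}000$'' would fail. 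What is missing is a descent mechanism: the Watson-type transformations $\lambda_p$, which preserve spinor regularity whenever $L_p$ is not split by a hyperbolic plane $\mathbb H$ while strictly decreasing $\ord_p(dL)$ and leaving the other prime powers unchanged. The paper's proof couples this descent with the explicitly computed $p$-adic structures of the $913$ regular candidates of \cite{JKS} and of the $29$ forms in Table 1: a hypothetical spinor regular form with discriminant beyond the search bound is driven down, step by step, to a spinor regular form whose discriminant lies below the bound yet appears on neither list --- a contradiction. Without this descent-against-known-lists argument, the gap between what local analysis yields and what is needed cannot be closed.

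A secondary defect is your final verification step, which invokes an ``effective version'' of the Duke--Schulze-Pillot representation theorem so that only integers up to an explicit bound need be checked. The constants in those asymptotic results are ineffective (they rest on Siegel-type lower bounds for $L$-functions), which is exactly why the paper remarks that the approach of \cite{BEHH} ``is ineffective for producing a bound for the largest possible discriminant.'' The paper needs no such step: $27$ of the $29$ forms in Table 1 lie in one-class spinor genera and are therefore spinor regular tautologically, the remaining two were treated in \cite{BEHH}, and completeness below $575{,}000$ is Jagy's exhaustive computation.
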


In particular, a ternary quadratic form that lies in a spinor genus consisting of a single equivalence class is spinor regular. Consequently, we have the following immediate corollary. 

\begin{corollary} Every ternary quadratic form that lies in a one-class spinor genus but not a one-class genus appears in Table 1.\end{corollary}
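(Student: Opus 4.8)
The plan is to deduce the corollary from Theorem~\ref{main} by showing that any form satisfying its hypotheses is necessarily spinor regular but \emph{not} regular. Write $\mathcal{Q}(X)$ for the set of positive integers represented by an object $X$ (a form, its spinor genus, or its genus); for any ternary $f$ one has the chain $\mathcal{Q}(f)\subseteq\mathcal{Q}(\mathrm{spn}\,f)\subseteq\mathcal{Q}(\mathrm{gen}\,f)$, so that $f$ is spinor regular exactly when the first inclusion is an equality and regular exactly when both are. The first step is to dispose of spinor regularity: if the spinor genus of $f$ consists of the single class $[f]$, then $\mathcal{Q}(\mathrm{spn}\,f)=\mathcal{Q}(f)$ by definition, so $f$ is trivially spinor regular, exactly as noted in the remark preceding the corollary.

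The substance of the argument is to show that such an $f$ cannot be regular unless its entire genus reduces to a single class; I would argue by contraposition. Assuming $f$ lies in a one-class spinor genus while $\mathrm{gen}\,f$ contains a second class $[g]\neq[f]$, I must produce a positive integer represented by $\mathrm{gen}\,f$ but not by $f$, which is precisely the failure of regularity. Here I would invoke the standard fact that every spinor genus within a fixed genus contains the same number of isometry classes; since $\mathrm{spn}\,f$ has a single class, it follows that $\mathrm{gen}\,f$ splits into $g\ge 2$ spinor genera, each a single class, and that $[g]$ lies in a spinor genus distinct from that of $f$.

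The heart of the matter is then the theory of spinor exceptional integers for ternary forms (Kneser, Schulze-Pillot). I would use the structural result that, once a ternary genus splits into more than one spinor genus, there exist squarefree positive integers represented by the genus but by only exactly half of its spinor genera, the representing spinor genera forming a coset of an index-two subgroup of the spinor class group. Choosing such an integer squarefree removes the distinction between representation and primitive representation. One then needs that no single spinor genus represents \emph{all} such integers; granting this, $\mathrm{spn}\,f$ must miss at least one of them, say $t$, whence $t\in\mathcal{Q}(\mathrm{gen}\,f)\setminus\mathcal{Q}(f)$ and $f$ is not regular. Combining this with spinor regularity and applying Theorem~\ref{main} places $f$ among the one-class-spinor-genus entries of Table~\ref{Tab:1}, completing the proof.

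I expect the main obstacle to be this last step: guaranteeing that the distinguished spinor genus $\mathrm{spn}\,f$ itself, rather than merely some spinor genus, fails to represent one of the spinor exceptional integers. For a single such integer one knows only that half the spinor genera miss it, and $\mathrm{spn}\,f$ might lie in the representing half; closing the gap requires that the index-two subgroups attached to the various spinor exceptional integers intersect trivially (equivalently, that the associated characters separate the spinor genera), so that the representing cosets have empty common intersection. Establishing this separation property—or otherwise exhibiting a spinor exceptional integer explicitly avoided by $\mathrm{spn}\,f$—is the one point where genuine input beyond Theorem~\ref{main} enters.
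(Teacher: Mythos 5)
Your first paragraph reproduces the paper's entire proof: if the spinor genus of $f$ consists of the single class of $f$, then trivially $f$ represents everything its spinor genus represents, so $f$ is spinor regular, and Theorem~\ref{main} then places $f$ in Table~\ref{Tab:1} \emph{provided $f$ is not regular}. The paper stops there; it makes no attempt to rule out the possibility that $f$ is regular. Everything else in your proposal is devoted to proving that exclusion -- that a form lying in a one-class spinor genus but a multi-class genus cannot be regular -- and this is where the proposal fails, not merely at the ``separation'' step you flag at the end, but irreparably: the claim you are trying to prove is false. The classical Jones--Pall example sits right next to the first entry of Table~\ref{Tab:1}: the genus of $x^2+y^2+16z^2$ (discriminant $64$) consists of exactly two classes, $x^2+y^2+16z^2$ and $2x^2+2y^2+5z^2+2yz+2xz$, and these constitute two distinct one-class spinor genera. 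The second form is the discriminant-$64$ entry of Table~\ref{Tab:1}, while $x^2+y^2+16z^2$ is \emph{regular} (it is among the regular forms of \cite{JKS}) and hence absent from the table. So a regular form can lie in a one-class spinor genus without lying in a one-class genus; your contrapositive target is unprovable, and the corollary can only be obtained in the way the paper's two-line derivation implicitly reads it, namely as the specialization of Theorem~\ref{main} to forms that are, in addition, not regular.

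The mathematical reason your spinor-exception strategy cannot close the gap is the distinction between primitive and arbitrary representations, which regularity ignores. The Kneser--Schulze-Pillot theory you invoke controls \emph{primitive} representations: if $t$ is a primitive spinor exception represented primitively by the spinor genus of $f$, and $p$ is inert in the associated imaginary quadratic field, then $tp^2$ is indeed not represented \emph{primitively} by that spinor genus -- but it is still represented, imprimitively, by scaling any representation of $t$. In the discriminant-$64$ genus the unique spinor exceptional square class is that of $1$ (the relevant field is $\mathbb{Q}(\sqrt{-1})$), so all spinor exceptional integers are perfect squares; the spinor genus of $x^2+y^2+16z^2$ fails to represent $9$ and $49$ primitively, yet it represents every square $m^2$ via $(m,0,0)$ and therefore misses nothing that its genus represents. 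This loophole is exactly what makes $x^2+y^2+16z^2$ regular, and it defeats any argument of the shape you propose, no matter how well the characters separate the spinor genera. Two smaller inaccuracies: spinor exceptional integers need not exist at all for a genus with several spinor genera (the candidate square class must be positive and represented by the genus, which can fail), and when they exist they need not be squarefree (in the example above they are squares); also, the fact that all spinor genera in a genus contain equally many classes is not needed for your second paragraph, since any class distinct from that of $f$ automatically lies outside the one-class spinor genus of $f$.
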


Within Table 1, 27 of the 29 forms appearing lie in one-class spinor genera. Proofs of the spinor regularity of the remaining two forms can be found in \cite{BEHH}. The strategy to be used in the proof of the above theorem originates in a paper of Chan and Earnest \cite{CE}. While the approach of \cite{BEHH} relies on asymptotic methods and is ineffective for producing a bound for the largest possible discriminant of a spinor regular ternary quadratic form, the methods introduced in \cite{CE} produced 1) a list of the possible prime divisors that can occur in the factorization of the discriminant of a spinor regular ternary form, and 2) for each prime $p$ in that list, an upper bound for the largest power of $p$ that can occur in such a discriminant. Consequently, there is a specific finite list of all possible discriminants for spinor regular ternaries. However, this list contains many discriminants that are well beyond the bound of Jagy's systematic search. For example, for the primes 2 and 3, the results in \cite{CE} only restrict the powers of those primes dividing potential discriminants to not exceed $2^{28}$ and $3^{13}$, respectively. Our approach here will be to refine and extend the techniques of \cite{CE} in order to perform a targeted sequential elimination of the remaining discriminants that lie outside the search bound. For this purpose, we will make extensive use of the explicit knowledge of the lattices corresponding to the 913 candidates for regular ternaries given in \cite{JKS} and the known spinor regular forms given in Table 1, and particularly of their $p$-adic local structures.  Local structures will be computed using SageMath \cite{sage}, in particular, its functionality for quadratic forms and integral lattices. 

The paper will be organized as follows.  In Section \ref{prelim}, we will introduce the necessary terminology and notation along with several technical lemmas, incorporating the relevant results from the previous paper \cite{CE}.  In Section \ref{results}, we will give a sequence of lemmas systematically eliminating potential discriminants, culminating in a proof of Theorem \ref{main}.  Auxiliary tables in support of the arguments in Section \ref{results} are found in the Appendix. 

\section{Notation and Preliminary Lemmas}\label{prelim}

Throughout the remainder of this paper, we will adopt the geometric language of quadratic lattices, following the terminology and notation from O'Meara's book \cite{OM} and the previous paper \cite{CE}. Let us first recall the conventions from \cite{CE} regarding the correspondence between quadratic forms and lattices. We use the sextuple $[a,b,c,d,e,g]$ to denote the form 
\[
f(x,y,z)=ax^2+by^2+cz^2+dyz+exz+gxy
\]
where $a,b,c,d,e,g\in \Z$, and associate to $f$ the symmetric matrix of second partial derivatives
\[
F=\begin{bmatrix}
2a  & g & e \\
g & 2b & d\\
e & d & 2c
\end{bmatrix}.
\]
The discriminant of $f$ is then given by 
\[disc(f) = \frac{1}{2}det(F),\]
where det denotes the determinant. The lattice associated to such a form $f$ is the quadratic $\Z$-lattice $L$ with Gram matrix $F$ with respect to some basis. Then $L$ has norm ideal $\frak n (L)\subseteq 2\Z$ and scale ideal $\frak s (L)\subseteq \Z$. Moreover, the form $f$ is primitive precisely when $\frak n (L)=2\Z$ (such lattices are referred to as {\em normalized} in \cite{CE}). Note that if $dL$ denotes the discriminant of the lattice $L$,  then 
\[
dL=2disc(f).
\]
Define $\mathcal{R}$ and $\mathcal{S}$ to be the sets consisting of the discriminants of regular ternary quadratic forms, and spinor regular forms that are not regular, respectively. So all discriminants in $\mathcal{R}$ are known by \cite{JKS}, and the problem at hand is to prove that the set $\mathcal{S}$ consists of only the discriminants appearing in Table 1.

Throughout the remainder of the paper, the term {\em ternary lattice} will always refer to a positive definite ternary quadratic $\Z$-lattice with $\frak n (L)=2\Z$, and {\em spinor regular lattice} will refer to a ternary lattice that corresponds to a spinor regular ternary quadratic form.  For a prime $p$, $\Z_p$ will denote the ring of $p$-adic integers and $\Z_p^\times$ the group of units in $\Z_p$.  For a ternary lattice $L$, $L_p$ will denote the quadratic $\Z_p$-lattice $L\otimes_\Z\Z_p$. 

The main feature of \cite{CE} that will be exploited here is the introduction of a family of lattice transformations analogous to transformations on quadratic forms first used by Watson in his systematic study of regular ternary quadratic forms in the unpublished thesis \cite{W}. These lattice transformations, which are now commonly referred to as Watson transformations, have become ubiquitous in the literature on quadratic forms satisfying various regularity conditions. Several variations of these transformations have been developed, depending on which convention regarding the scaling of the lattices at the prime 2 is adopted. For ease of reference to results from the paper \cite{CE}, we will utilize the version $\lambda_p$ of the transformations defined in \S 2 of that paper. For our purposes, the main property of these transformations is that, for lattices with discriminant divisible by $2p^2$, $\lambda_p$ preserves spinor regularity whenever the lattice is not $p$-adically split by a hyperbolic plane $\Hy$, while reducing the power of the prime $p$ dividing the discriminant of the resulting lattice and leaving the powers of the other prime factors unchanged.  

For the definition and basic properties of the $\lambda_p$ transformations the reader is referred to the previous paper \cite{CE}.  We will now summarize and extend some results from \cite{CE} that provide the details of the action of $\lambda_p$ on $L_p$ that will be needed throughout the remainder of the paper.

\begin{lemma}\label{descent}
Let $L$ be a spinor regular lattice and $p$ an odd prime such that $p^2\mid dL$ and 
\[
L_p\cong \lan a,p^\beta b,p^\gamma c\ran
\]
with $a,b,c\in \Z_p^\times$ is not split by $\mathbb H$.  Then $\lambda_p(L)$ is spinor regular and 
\[
(\lambda_p(L))_p\cong\begin{cases}
\lan a,b,p^{\gamma-2}c\ran & \text{ when }\beta=0\text{ and }\gamma\geq 2\\
\lan b,pa,p^{\gamma-1}c\ran & \text{ when }\beta=1\\
\lan a,p^{\beta-2}b,p^{\gamma-2}c\ran & \text{ when }\beta\geq 2.
\end{cases}
\] 
\end{lemma}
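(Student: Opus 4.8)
The plan is to prove this by explicit computation of the action of $\lambda_p$ on the local lattice $L_p$, using the definition of $\lambda_p$ from \cite{CE} together with the preservation-of-spinor-regularity property already summarized above. The spinor regularity of $\lambda_p(L)$ follows immediately from that summarized property: since $p^2 \mid dL$ and $L$ has $\mathfrak{n}(L) = 2\Z$, we have $2p^2 \mid dL$, and by hypothesis $L_p$ is not split by $\Hy$, so $\lambda_p$ preserves spinor regularity. Thus the substance of the lemma is the computation of the local structure $(\lambda_p(L))_p$ in each of the three cases.

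First I would recall the precise definition of $\lambda_p$ from \cite{CE}. The Watson transformation $\lambda_p$ is built from the sublattice consisting of vectors $v \in L$ such that $Q(v + z) \equiv Q(z) \pmod{p^{?}}$ for all $z \in L$ (the exact congruence condition and any subsequent rescaling being what must be read off from \cite{CE}); concretely, on an orthogonal $\Z_p$-basis diagonalizing $L_p$ as $\langle a, p^\beta b, p^\gamma c\rangle$, the transformation acts component-wise on the Jordan splitting by lowering the exponents of the components lying above a certain scale threshold and then rescaling so that the result is again primitive. Since $p$ is odd, $L_p$ admits a diagonal Jordan splitting, and $\lambda_p$ respects this splitting. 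The key point is that $\lambda_p$ collapses the lattice onto its $p$-maximal part at the relevant scale and then divides out a power of $p$ to renormalize; the three cases in the statement correspond precisely to the three possibilities for how many Jordan components have scale a unit versus scale a positive power of $p$.

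The main work, then, is the case analysis. In the case $\beta \geq 2$, all three scales differ, the lattice has a single unit-scale component, and applying $\lambda_p$ uniformly lowers the two nonunit exponents by $2$, giving $\langle a, p^{\beta-2}b, p^{\gamma-2}c\rangle$. In the case $\beta = 1$, the unimodular and $p$-modular components sit at adjacent scales, so the transformation shifts the whole splitting: the two lowest components get multiplied up to scale $p$ and $p^0$ respectively after renormalization, producing $\langle b, pa, p^{\gamma-1}c\rangle$, where the reordering reflects that the $p$-scaled component becomes the new leading unit. The case $\beta = 0$, $\gamma \geq 2$ has a binary unimodular part $\langle a, b\rangle$ and a single high-scale component, and here only the top exponent drops by $2$, yielding $\langle a, b, p^{\gamma-2}c\rangle$. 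In each case I would verify the exponents by tracking the scale ideal and norm ideal of the image and confirming that the result is again normalized.

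The hard part will be matching the bookkeeping of \cite{CE}'s definition of $\lambda_p$ to the three-case output, in particular getting the \emph{rescaling} correct so that the exponents come out exactly as stated (the asymmetry between the $\beta=0$ case, where $\gamma$ drops by $2$, and the $\beta=1$ case, where $\gamma$ drops by only $1$, is the delicate point and must be driven by the precise cutoff in the definition of the transformation). I expect the unit coefficients $a, b, c$ to be preserved up to squares in $\Z_p^\times$, which suffices since the $\Z_p$-isometry class of a diagonal lattice depends on the units only through their classes modulo squares; confirming that the stated coefficients are the correct representatives, and that no additional unit adjustment is forced by the rescaling, is the remaining routine check. Since $L_p$ is assumed not split by $\Hy$, the transformation stays within the class of lattices for which the spinor regularity is preserved, so no degenerate subcase arises.
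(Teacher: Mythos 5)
For context: the paper itself does not prove this lemma by computation at all --- its entire proof is the one-line citation ``This is just \cite[Lemma 2.7]{CE} and \cite[Proposition 3.2]{CE}.'' Your decision to rederive the local structure from the definition of $\lambda_p$ is therefore a different, more self-contained route, and it is the right instinct: it mirrors how the paper handles the $p=2$ analogue in Lemma \ref{descent2}. Your treatment of the spinor-regularity half is also fine ($dL=2\,disc(f)$ gives $2p^2\mid dL$, and non-splitting by $\Hy$ lets you invoke \cite[Proposition 3.2]{CE}). The genuine gap is in the structural half: you never pin down the definition of $\lambda_p$ --- the congruence modulus is left as ``$p^{?}$'' and the rescaling as something ``to be read off from \cite{CE}'' --- yet those two pieces of data are the entire content of the three-case formula. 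As written, your case analysis states the answers rather than deriving them, and you concede this yourself by deferring ``the hard part'' (the $\beta=0$ versus $\beta=1$ asymmetry) to ``the precise cutoff in the definition'' without ever resolving it.

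Moreover, the mechanism you do describe --- ``lowering the exponents of the components lying above a certain scale threshold and then rescaling'' --- is not what $\lambda_p$ does, and taken literally it gives the wrong answer precisely in the case you flag as delicate. For odd $p$ and $\mathfrak{n}(L)=2\Z$, the sublattice $\Lambda_p(L)$ consists of those $x\in L$ with $B(x,L)\subseteq p\Z$ and $Q(x)\in 2p\Z$; on $\lan a,p^\beta b,p^\gamma c\ran$ the bilinear condition forces the unit-scale coordinates into $p\Z_p$ and leaves the components of scale divisible by $p$ untouched. So $\Lambda_p$ \emph{raises} the unit exponents by $2$, producing $\lan p^2a,p^2b,p^\gamma c\ran$, $\lan p^2a,pb,p^\gamma c\ran$, and $\lan p^2a,p^\beta b,p^\gamma c\ran$ in the three cases. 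The asymmetry you worried about then comes from the norm-dependent renormalization, exactly parallel to the $\lambda_2$ definition quoted in the paper's proof of Lemma \ref{descent2}: one scales by $p^{-1}$ when $\mathfrak{n}(\Lambda_p(L))=2p\Z$ (the case $\beta=1$, where the middle component keeps norm of $p$-order one) and by $p^{-2}$ when $\mathfrak{n}(\Lambda_p(L))=2p^2\Z$ (the cases $\beta=0$, $\gamma\geq2$ and $\beta\geq2$). This yields $\lan a,b,p^{\gamma-2}c\ran$, $\lan b,pa,p^{\gamma-1}c\ran$, $\lan a,p^{\beta-2}b,p^{\gamma-2}c\ran$ as claimed; note in particular that for $\beta=1$ the unit component's exponent goes \emph{up}, which no ``lower the top exponents, then renormalize to primitive'' procedure can produce. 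Supplying this definition and the short computation above closes the gap; alternatively, you could do what the paper does and simply cite \cite[Lemma 2.7]{CE} for the local structure.
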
 

\begin{proof}
This is just \cite[Lemma 2.7]{CE} and \cite[Proposition 3.2]{CE}.
\end{proof}
When $p=2$ we need to consider more possibilities for the local structure of $L_2$.  In cases where $L_2$ is not split by $\mathbb H$ or $\mathbb A$ (the latter is just the binary lattice isometric to the lattice $A(2,2)$ in O'Meara's notation, see \cite[$\S$ 106A]{OM}), $L_2$ must be one of
\begin{eqnarray}\label{diagonal}
\lan 2a,2^\beta b,2^\gamma c\ran
\end{eqnarray}
with $a,b,c\in \Z_2^\times$ and $\beta\leq \gamma$, 
\begin{eqnarray}\label{hyperbolic}
\lan 2a\ran \perp 2^\gamma \mathbb H,
\end{eqnarray}
or 
\begin{eqnarray}\label{anisotropic}
\lan 2a\ran \perp 2^\gamma \mathbb A.
\end{eqnarray}
These cases will suffice for the purposes of this paper. 

\begin{lemma}\label{descent2}
Let $L$ be a spinor regular lattice such that $2^4\mid dL$ and $L_2$ is not split by $\mathbb H$ or $\mathbb A$.  Then $\lambda_2(L)$ is spinor regular and 
\[
(\lambda_2(L))_2\cong \begin{cases}
\begin{bmatrix}
a+b & a-b\\
a-b & a+b
\end{bmatrix}\perp\lan2^{\gamma-1} c\ran & \text{ when $L_2$ has the form (\ref{diagonal}), } \beta=1\text{ and }a\equiv b\mod 4\\
&\\
\begin{bmatrix}
\frac{a+b}{2} & \frac{a-b}{2}\\
\frac{a-b}{2} & \frac{a+b}{2}
\end{bmatrix}\perp\lan2^{\gamma-2} c\ran & \text{ when $L_2$ has the form (\ref{diagonal}), }\beta=1\text{ and }a\not \equiv b\mod 4\\
&\\
\lan 2b,4a,2^{\gamma-1} c\ran& \text{ when $L_2$ has the form (\ref{diagonal}) and }\beta=2\\
\lan 2a,2^{\beta-2} b,2^{\gamma-2} c\ran& \text{ when $L_2$ has the form (\ref{diagonal}) and }2< \beta\leq \gamma\\
\lan 2a\ran \perp 2^{\gamma-2} \mathbb H& \text{ when $L_2$ has the form (\ref{hyperbolic})}\\
\lan 2a\ran \perp 2^{\gamma-2} \mathbb A& \text{ when $L_2$ has the form (\ref{anisotropic})}.
\end{cases}
\]
\end{lemma}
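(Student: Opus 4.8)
The plan is to separate the two assertions of the lemma: that $\lambda_2(L)$ is again spinor regular, and that its localization at $2$ takes the stated form. The first is immediate from the general properties of $\lambda_2$ recalled above. The hypothesis $2^4 \mid dL$ supplies (more than) the divisibility $2\cdot 2^2 \mid dL$ needed for $\lambda_2$ to lower the power of $2$ in the discriminant, while the hypothesis that $L_2$ is not split by $\mathbb{H}$ or $\mathbb{A}$ in particular ensures that it is not split by $\mathbb{H}$, which is precisely the condition under which $\lambda_2$ preserves spinor regularity. Since $\lambda_2$ leaves the powers of all odd primes in the discriminant unchanged, $\lambda_2(L)_q \cong L_q$ for every odd $q$, and it remains only to compute $(\lambda_2(L))_2$ from $L_2$. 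I would do this by applying the definition of $\lambda_2$ from \cite{CE} to each of the admissible local forms (\ref{diagonal}), (\ref{hyperbolic}), and (\ref{anisotropic}) in turn.

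For the diagonal form (\ref{diagonal}) the computation runs parallel to that of Lemma \ref{descent}, the essential new feature being the norm-$2$ normalization at the prime $2$. When $2 < \beta \le \gamma$ the three Jordan components have pairwise distinct scales, and passing to the sublattice defining $\lambda_2(L)$ and rescaling lowers the two higher scales by $2$, giving $\langle 2a, 2^{\beta-2} b, 2^{\gamma-2} c\rangle$; the case $\beta = 2$ is treated the same way, the rescaling reorganizing the first two components into $\langle 2b, 4a, 2^{\gamma-1} c\rangle$. In each case I would exhibit a basis for the relevant sublattice, read off its Gram matrix, and then rescale to restore the norm ideal to $2\Z_2$, noting that $2^4 \mid dL$ forces $\gamma$ to be large enough that all the displayed exponents are nonnegative.

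The subtle case, which I expect to be the main obstacle, is $\beta = 1$, where the first two components $\langle 2a, 2b\rangle$ share the scale $2\Z_2$ and hence do not decouple diagonally under $\lambda_2$. Here I would compute the image of this binary block directly from the sublattice definition and then normalize; the outcome is governed by the $2$-adic valuation of $a-b$. When $a \equiv b \pmod 4$ the normalized block retains scale $2\Z_2$, has determinant $4ab$, and equals $\begin{bmatrix} a+b & a-b \\ a-b & a+b\end{bmatrix}$, leaving the orthogonal complement $\langle 2^{\gamma-1} c\rangle$; when $a \not\equiv b \pmod 4$ the off-diagonal entry $a-b$ has valuation exactly $1$, the normalized block becomes unimodular with determinant $ab$ and equals $\begin{bmatrix} \frac{a+b}{2} & \frac{a-b}{2} \\ \frac{a-b}{2} & \frac{a+b}{2}\end{bmatrix}$, leaving $\langle 2^{\gamma-2} c\rangle$. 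Confirming the scale, norm, and determinant of these two Gram matrices, and that the residual $c$-component is scaled as stated, is where the careful $2$-adic bookkeeping concentrates. Finally, for the forms (\ref{hyperbolic}) and (\ref{anisotropic}), the modular plane $2^\gamma \mathbb{H}$ (respectively $2^\gamma \mathbb{A}$) lies at a scale strictly above that of $\langle 2a\rangle$, and I would check that $\lambda_2$ preserves its isometry type while lowering its scale by $2$, yielding $\langle 2a\rangle \perp 2^{\gamma-2}\mathbb{H}$ (respectively $\langle 2a\rangle \perp 2^{\gamma-2}\mathbb{A}$); the stability of the $\mathbb{H}$- and $\mathbb{A}$-types under the transformation makes these two cases routine once the diagonal cases are in hand.
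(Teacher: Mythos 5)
Your proposal is correct and follows essentially the same route as the paper: spinor regularity comes from \cite[Proposition 3.2]{CE}, and $(\lambda_2(L))_2$ is computed case by case from the definition of $\lambda_2$ by passing to the sublattice $\Lambda_4(L_2)=\{x\in L_2 : Q(x)\in 4\Z_2\}$ (the paper justifies this identification via \cite[Lemma 2.2(g)]{CE}, using $\mathfrak{s}(L_2)\subseteq 2\Z_2$) and then rescaling by $1/2$ or $1/4$ according to whether $\mathfrak{n}(\Lambda_4(L_2))$ is $4\Z_2$ or $8\Z_2$. Your $\beta=1$ dichotomy on the $2$-adic valuation of $a-b$, the resulting Gram matrices and determinants, and the treatment of the $\mathbb H$ and $\mathbb A$ cases agree exactly with the paper's computation.
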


\begin{proof}
The fact that $\lambda_2(L)$ is spinor regular under these assumptions is just \cite[Proposition 3.2]{CE}.  Suppose we have a spinor regular ternary lattice, $L$, with $\ord_2(dL)>3$ which is not split at 2 by $\mathbb H$ or $\mathbb A$.  Since in every case $\mathfrak{s}(L)\subseteq 2\Z_2$, by \cite[Lemma 2.2(g)]{CE} we have $\Lambda_4(L_2)=\{x\in L_2:Q(x)\in 4\Z_2\}$, and hence
\[
\Lambda_4(L_2) =\begin{cases}
\begin{bmatrix}
2a+2b & 2a-2b\\
2a-2b & 2a+2b
\end{bmatrix}\perp\lan2^\gamma c\ran & \text{ when $L_2$ has the form (\ref{diagonal}) and }\beta=1 \\
\lan 8a, 4b, 2^\gamma c\ran & \text{ when $L_2$ has the form (\ref{diagonal}) and }\beta=2\\
\lan 8a, 2^\beta b, 2^\gamma c\ran & \text{ when $L_2$ has the form (\ref{diagonal}) and }\beta>2\\
\lan 8a\ran \perp 2^\gamma\mathbb H& \text{ when $L_2$ has the form (\ref{hyperbolic})}\\
\lan 8a\ran \perp 2^\gamma\mathbb A & \text{ when $L_2$ has the form (\ref{anisotropic})},
\end{cases}
\]
but now the result follows immediately from the definition  
\[
\lambda_2(L)=\begin{cases}
\Lambda_{4}(L)^{1/2} & \text{ if }\mathfrak{n}(\Lambda_{4}(L))=4\Z\\
\Lambda_{4}(L)^{1/4} & \text{ if }\mathfrak{n}(\Lambda_{4}(L))=8\Z
\end{cases}
\]
coming from \cite{CE}.
\end{proof}

For a prime $q\neq p$, $\lambda_p(L_q)=(\lambda_p(L))_q$ is isometric to the scaling of $L_q$ by an element of $\Z_q^\times$.  Consequently the powers of $q$ in the discriminants $dL$ and $d(\lambda_p(L))$ are identical for $q\neq p$; hence, $dL$ and $d(\lambda_p(L))$ differ only by a power of $p$. 

\begin{lemma}\label{order}
Let $L$ be a spinor regular ternary lattice such that $p\mid dL$ and $L_p\cong \lan a,p^\beta b,p^\gamma c\ran $ with $a,b,c\in \Z_p^\times$ and $0\leq \beta\leq \gamma$. Then
\begin{enumerate}
\item If $\beta=0$ then
\[
\gamma\leq \begin{cases}
1 & \text{ if }p=7,11,13\\
2 & \text{ if }p=5\\
4& \text{ if }p=3.
\end{cases}
\]
\item If $\beta=1$ then 
\[
\gamma\leq \begin{cases}
1 & \text{ if }p=13\\
2 & \text{ if }p=5,7,11\\
4& \text{ if }p=3.
\end{cases}
\]
\item If $\beta\geq 2$, then $p=3$ or 7 and 
\[
\begin{cases}
\beta=2\text{ and }\gamma=3 & \text{ for }p=7,\\
\beta\leq 5\text{ and }\gamma\leq 8 & \text{ for }p=3.
\end{cases}
\]
\end{enumerate}
\end{lemma}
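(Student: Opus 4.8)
The plan is to read the bounds as purely local statements at $p$ and to split the argument according to whether $L$ is regular or only spinor regular. If $L$ is regular, the desired inequalities are immediate from inspection of the $3$-adic, $5$-adic, etc.\ structures of the $913$ candidate lattices classified in \cite{JKS}. Otherwise $L$ is spinor regular but not regular, so some integer represented by $\mathrm{gen}(L)$ is missed by the spinor genus of $L$; hence $\mathrm{gen}(L)$ splits into more than one spinor genus and carries a primitive spinor exceptional square class $t$, in the sense of \cite{BEHH, CE}. I would then take $p$ to be one of the admissible odd primes $3,5,7,11,13$ supplied by \cite{CE} and bound $\beta$ and $\gamma$ through local conditions at $p$.

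The engine is the explicit computation of two local invariants of $L_p\cong\langle a,p^\beta b,p^\gamma c\rangle$: the spinor norm group $\theta(O^+(L_p))\subseteq\mathbb{Q}_p^\times/(\mathbb{Q}_p^\times)^2$, and, for the exceptional class $t$, the relative spinor norm group $\theta^*(L_p,t)$ governing primitive representations of $t$ at $p$. Both are determined by the parities of $\beta,\gamma$ and the quadratic characters of $a,b,c$, and I would tabulate them. For $p$ to participate in the spinor decomposition, $\theta^*(L_p,t)$ must be a proper subgroup; simultaneously $t$ must be primitively represented by $L_p$. The first condition restricts the parities of $\beta$ and $\gamma$ together with the unit classes, while the second ties the $p$-adic valuation of $t$ to $\beta$ and $\gamma$ and caps how far apart the scales may lie. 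Reading these constraints off case by case, organized by whether $\beta=0$, $\beta=1$, or $\beta\ge2$, produces the tabulated inequalities.

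For $\beta\ge2$ the admissible primes shrink to $3$ and $7$, and here I would combine the local analysis above with the descent of Lemma \ref{descent}. Since a diagonal $\mathbb{Z}_p$-lattice with three distinct scales is not split by $\mathbb{H}$, applying $\lambda_p$ sends $\langle a,p^\beta b,p^\gamma c\rangle$ to $\langle a,p^{\beta-2}b,p^{\gamma-2}c\rangle$ while preserving spinor regularity, which transfers information between the $\beta\ge2$ and the base cases and controls the magnitude of $\gamma$. The exclusion of $p=5,11,13$ from the $\beta\ge2$ regime, and the rigidity $\beta=2,\gamma=3$ for $p=7$, come from the relative spinor norm computation: for these primes no exceptional class $t$ survives once the middle scale exceeds the unit scale by two or more, whereas for $p=3$ the richer square-class group leaves a narrow band of possibilities culminating in $\beta\le5,\ \gamma\le8$.

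The step I expect to be the main obstacle is the explicit determination of $\theta^*(L_3,t)$ together with the primitive-representation condition at $p=3$. Because $\mathbb{Q}_3^\times/(\mathbb{Q}_3^\times)^2$ has order four and ternary $\mathbb{Z}_3$-lattices can be anisotropic, several candidate exceptional classes $t$ must be carried along simultaneously, and it is the weakest of these that is responsible for the comparatively large bounds $\beta\le5$ and $\gamma\le8$. Ruling out any admissible $t$ that would permit a higher power of $3$ is the delicate point; I would settle the borderline configurations by direct $3$-adic computation and cross-check them against the explicit local structures of the lattices in \cite{JKS} and Table 1.
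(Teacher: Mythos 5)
The paper does not actually reprove this lemma: its entire proof is the citation ``This is just [CE, Proposition~5.4]'', so the benchmark is the argument of Chan--Earnest. Your sketch borrows the right vocabulary from that paper (Watson transformations, spinor exceptional square classes, relative spinor norm groups), but the step that is supposed to produce the actual numerical bounds is wrong. You claim the inequalities can be read off from the local invariants $\theta(O^+(L_p))$, $\theta^*(L_p,t)$, and the primitive representability of $t$ by $L_p$. Every one of these invariants depends only on the square classes of $a,b,c$, the parities of $\beta$ and $\gamma$, and the position of $\ord_p(t)$ relative to $\beta$ and $\gamma$; all of them are unchanged when $\gamma$ is replaced by $\gamma+2$. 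Concretely, $\lan 1,3,3^3\ran$ and $\lan 1,3,3^5\ran$ have identical spinor norm groups, identical relative spinor norm groups for every fixed square class $t$, and identical primitive-representation conditions, yet the lemma permits the first and forbids the second ($\gamma\le 4$ when $p=3$, $\beta=1$). The same periodicity makes it impossible for a purely $p$-local tabulation to single out the rigid case $(\beta,\gamma)=(2,3)$ for $p=7$ while excluding $(2,5)$. So no case analysis of local spinor-norm data, however careful, can yield magnitude bounds; a global input is indispensable, and your proposal never supplies one (the existence of the exceptional class $t$ is global, but you only ever exploit it through local conditions).

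The missing ingredient is the representation-theoretic reduction argument that is the actual engine of [CE]: after descending by Watson transformations to a spinor regular lattice behaving well away from $p$ (so that $\Z_q^\times\subseteq\theta(O^+(L_q))$ for all $q\ne p$, which confines potential spinor exceptions to finitely many square classes supported at $2$ and $p$), one uses positive definiteness --- the lattice must globally represent explicit small integers that its genus represents and that avoid the exceptional square classes --- and successive-minima estimates then bound the Jordan exponents. That is where the non-periodic numbers in the statement come from. A second, independent defect: your treatment of $\beta\ge 2$ by iterating $\lambda_p$ only controls the difference $\gamma-\beta$, since each application sends $(\beta,\gamma)$ to $(\beta-2,\gamma-2)$; even granting your base cases $\beta\in\{0,1\}$, this can never bound $\beta$ itself, so it cannot produce $\beta\le 5$ for $p=3$, nor exclude, say, $(\beta,\gamma)=(4,5)$ for $p=7$. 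Finally, note that the clean dichotomy ``regular (inspect [JKS]) versus not regular (exceptional class exists)'' is fine as far as it goes, but all the content of the lemma lies in the second branch, which is exactly where your plan has the gap.
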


\begin{proof}
This is just \cite[Proposition 5.4]{CE}.
\end{proof}

We have omitted the information for $p=17,23$ presented in the original proposition, but as we will see shortly, these values will become obsolete. For an odd prime $p$, let $r_p$ denote the largest power of the prime $p$ occurring in the factorization of any discriminant in $\mathcal{R}$. These values can be obtained by an examination of the factorizations of the discriminants of the forms appearing in the list in \cite{JKS}. When $p=2$, we fix $r_2=8$, since from \cite[Proposition 5.5(a)]{CE} we know that a lattice can only be split by $\mathbb H$ if the order of 2 in its discriminant is less than 8.  For our purposes, the following values of $r_p$ will be relevant,
\[
r_2=8, r_3=5, r_5=3, r_7=2, r_{11}=2, r_{13}=2.
\]
Then we have the following useful corollary. 

\begin{corollary}\label{notsplit}
For a spinor regular ternary lattice $L$, if $\ord_p(dL)\geq r_p$, then $L_p$ is not split by $\mathbb H$. 
\end{corollary}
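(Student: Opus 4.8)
The plan is to prove the contrapositive: assuming $L_p$ is split by $\mathbb{H}$, I will show $\ord_p(dL)<r_p$, handling the prime $2$ separately from the odd primes. At $p=2$ the statement is essentially built in, since the value $r_2=8$ was chosen precisely so that \cite[Proposition 5.5(a)]{CE} applies, and that result says a ternary lattice can be split by $\mathbb{H}$ only when $\ord_2(dL)<8$. Hence $\ord_2(dL)\geq r_2=8$ already forces $L_2$ to be non-split, and nothing further is needed at $2$.

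For an odd prime $p$ the work is to determine exactly when a diagonal lattice can be split by $\mathbb{H}$. Writing $L_p\cong\langle a,p^\beta b,p^\gamma c\rangle$ with $a,b,c\in\Z_p^\times$ and $0\leq\beta\leq\gamma$, the key observation is that $\mathbb{H}$ is unimodular and isotropic, so by the essential uniqueness of Jordan splittings at odd primes any orthogonal $\mathbb{H}$-summand must sit inside the unimodular Jordan component of $L_p$. When $\beta\geq 1$ that component is the rank-one lattice $\langle a\rangle$, which contains no isotropic plane; thus splitting by $\mathbb{H}$ forces $\beta=0$ together with $\langle a,b\rangle\cong\mathbb{H}$ (equivalently $-ab\in(\Z_p^\times)^2$). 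Consequently $L_p\cong\mathbb{H}\perp\langle p^\gamma c\rangle$, and since the factor of $2$ relating $dL$ to $disc(f)$ is a unit at odd $p$, we obtain $\ord_p(dL)=\gamma$.

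It then remains to feed $\beta=0$ into Lemma \ref{order}(1). That lemma bounds $\gamma$ by $4,2,1,1,1$ for $p=3,5,7,11,13$ respectively, and each of these is strictly smaller than the matching value $r_p\in\{5,3,2,2,2\}$. Hence $\ord_p(dL)=\gamma<r_p$, which is exactly the contrapositive of the assertion for odd $p$ and completes the proof.

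I expect the only genuinely delicate step to be the structural reduction in the second paragraph: one must confirm that splitting by the \emph{unimodular} $\mathbb{H}$ genuinely forces the leading Jordan block to have rank at least two and to be isotropic, and cannot instead be accounted for by a hyperbolic plane $p^\gamma\mathbb{H}$ sitting at a positive scale $\gamma\geq 1$ — which does not constitute splitting by $\mathbb{H}$, exactly as in the non-split $2$-adic form recorded in (\ref{hyperbolic}). Once this scale bookkeeping is settled, comparing $\gamma$ against $r_p$ via Lemma \ref{order} is purely numerical.
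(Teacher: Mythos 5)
Your proof is correct, and at $p=2$ it is identical to the paper's justification: $r_2=8$ is chosen precisely so that \cite[Proposition 5.5(a)]{CE} settles that case, and nothing more is said there. For the odd primes, however, your route differs from the one the paper's definition of $r_p$ suggests. Since $r_p$ is defined as the largest $p$-power occurring in $\mathcal R$, the implicit argument the paper is gesturing at runs through regularity: if a spinor regular $L$ were split by $\mathbb H$ at an odd $p$, then $\theta(O^+(L_p))=\mathbb{Q}_p^\times$, and after the ``behaves well away from $p$'' reduction of \cite[Lemma 4.1]{CE} every local spinor norm group contains the local units, so \cite[102:9]{OM} forces $L$ to be regular and hence $\ord_p(dL)\leq r_p$. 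Note that this argument only yields the weak inequality, leaving the boundary case $\ord_p(dL)=r_p$ to be excluded by inspecting the forms of $\mathcal R$ that attain $r_p$. Your argument sidesteps this entirely: forcing $\beta=0$ via uniqueness of Jordan splittings (your scale bookkeeping distinguishing the unimodular $\mathbb H$ from $p^\gamma\mathbb H$, as in (\ref{hyperbolic}), is exactly right) and then quoting Lemma \ref{order}(1) gives $\ord_p(dL)\leq 4,2,1,1,1$ for $p=3,5,7,11,13$, each strictly below the corresponding $r_p\in\{5,3,2,2,2\}$, so the contrapositive follows with room to spare. The only point to be conscious of is that you apply Lemma \ref{order} to a lattice that \emph{is} split by $\mathbb H$; the lemma as stated in this paper carries no non-splitting hypothesis (and is used unconditionally elsewhere, e.g.\ in Lemma \ref{Lem:1}), so the step is legitimate within the paper's framework, but since the source \cite[Proposition 5.4]{CE} sits alongside a companion proposition devoted to the split case, a careful reader should confirm that its bounds do cover split lattices. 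Granting the lemma as stated, your proof is complete and in fact sharper than the regularity-based reading.
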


Now we can begin to use the lemmas above together with results from the previous paper, \cite{CE}, to begin eliminating certain factors from the elements in $\mathcal S$. Recall from \cite{CE}, that a lattice $L$ is said to {\em behave well at a prime $p$} if either $2p^2$ does not divide $dL$ or $L_p$ is split by $\mathbb H$, and $L$ is said to {\em behave well away from $p$} if $L$ behaves well at $q$ for all primes $q\neq p$.

\begin{lemma}\label{primepair}
If the product of two distinct odd primes $p$ and $q$ divides a discriminant in $\mathcal S$, then $p\cdot q\in \{3\cdot 5,\,\,\,3\cdot 7,\,\,\,3\cdot 11,\,\,\,3 \cdot 13,\,\,\, 5\cdot 7,\,\,\,11\cdot 13\}$.
\end{lemma}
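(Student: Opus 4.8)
The plan is to exploit the combination of Lemma \ref{order} (which caps the exponent of each odd prime that can divide a discriminant in $\mathcal{S}$) with the descent machinery of Lemma \ref{descent}, using the Watson transformation $\lambda_p$ to strip prime powers down to a controlled local shape and then appeal to the known set $\mathcal{R}$. First I would observe that if $p\cdot q \mid dL$ for two distinct odd primes, then by Lemma \ref{order} both $p$ and $q$ must come from the finite list $\{3,5,7,11,13\}$, since any larger prime forces $\beta=\gamma=0$, i.e. cannot divide $dL$ at all, and for the listed primes the exponents $\beta,\gamma$ in the local diagonalization $L_p\cong\langle a,p^\beta b,p^\gamma c\rangle$ are bounded. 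So the task reduces to a finite check over the $\binom{5}{2}=10$ unordered pairs drawn from $\{3,5,7,11,13\}$, and I must rule out exactly the four pairs not appearing in the asserted list, namely $5\cdot 11$, $5\cdot 13$, $7\cdot 11$, and $7\cdot 13$.

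Next I would set up the elimination of a bad pair $\{p,q\}$ as follows. Since $p\cdot q \mid dL$, both $p$ and $q$ divide $dL$, and by repeatedly applying $\lambda_p$ (via Lemma \ref{descent}) I can descend to a spinor regular lattice $L'$ in which $\ord_p(dL')$ has been reduced to its minimal value while $\ord_q(dL')=\ord_q(dL)\geq 1$ is preserved, because the transformations $\lambda_p$ leave the $q$-adic local structure essentially unchanged up to unit scaling, as noted in the remark following Lemma \ref{descent2}. The strategy is then to reach a lattice whose discriminant is $p^{a}q^{b}\cdot(\text{power of }2)$ with $a,b$ forced into a narrow range by Lemma \ref{order}, and then to compare against the finite list of discriminants actually occurring in $\mathcal{R}$ and in Table 1. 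The crucial point is that for each forbidden pair, the admissible exponent combinations produce discriminants that simply do not occur among the regular forms of \cite{JKS} nor in Table 1, yielding a contradiction to spinor regularity.

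To make the descent legitimate I must verify, at each application of $\lambda_p$, that the hypothesis of Lemma \ref{descent} holds, namely that $L_p$ is not split by $\mathbb{H}$. Here Corollary \ref{notsplit} is the key tool: whenever $\ord_p(dL)\geq r_p$ the lattice is automatically not split by $\mathbb{H}$ at $p$, so the descent proceeds freely until the exponent drops below $r_p$; the residual low-exponent cases must be handled by direct inspection of the finitely many local possibilities. I would organize the argument pair-by-pair, in each case recording the allowable $p$-adic and $q$-adic shapes permitted by Lemma \ref{order}, and cross-referencing the resulting candidate discriminants against the tabulated data.

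The main obstacle I anticipate is not the descent itself but the bookkeeping of the genus-versus-spinor-genus distinction: reducing the exponent of $p$ via $\lambda_p$ preserves spinor regularity only under the no-$\mathbb{H}$-splitting hypothesis, and near the bottom of the descent (where $\ord_p(dL)$ is small) one cannot invoke Corollary \ref{notsplit} and must instead argue directly that the relevant local structures still satisfy the hypotheses or, failing that, exhibit the contradiction at a higher exponent before splitting can occur. Thus the delicate step will be confirming, for each of the four forbidden pairs, that the entire descent path stays within the non-split regime long enough to land on a discriminant provably absent from $\mathcal{R}\cup\mathcal{S}$, and I expect this to require the explicit local-structure computations (carried out in SageMath and recorded in the Appendix tables) rather than a purely formal argument.
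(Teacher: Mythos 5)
Your proposal has two genuine gaps, the first of which is fatal to its structure. Your opening reduction misreads Lemma \ref{order}: that lemma gives exponent bounds only for $p\in\{3,5,7,11,13\}$ and asserts nothing about other primes (the paper explicitly omitted the $p=17,23$ data from \cite[Proposition 5.4]{CE}); it does not say that a larger prime "cannot divide $dL$ at all." The statement that the odd prime divisors of discriminants in $\mathcal S$ lie in $\{3,5,7,11,13\}$ is Lemma \ref{primefactor}, which appears \emph{after} this lemma and whose proof invokes Lemma \ref{primepair} — so using it here would be circular. The only restriction available at this point is \cite[Corollary 4.4]{CE}, which still permits $17$ and $23$, and in any case the lemma to be proved concerns \emph{arbitrary} distinct odd primes, so pairs such as $3\cdot 17$ or $5\cdot 23$ are part of what must be excluded; your ten-pair reduction silently drops them.

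Your elimination scheme for the remaining pairs is also circular in a second way: to certify that a descended discriminant is "provably absent from $\mathcal R\cup\mathcal S$" you must either place it below Jagy's search bound of 575,000 or already know $\mathcal S$ there, but $\lambda_p$ and $\lambda_q$ leave the power of $2$ (and any further prime factors) untouched, and at this stage that power is only bounded by $2^{28}$ via \cite{CE}; moreover the Appendix tables contain no data for pairs such as $7\cdot 11$. The paper's proof avoids both problems and is genuinely different from what you propose: by \cite[Lemma 4.1]{CE} one replaces $L$ by a spinor regular $L'$ that behaves well away from $p$ and $q$, so that $\Z_r^\times\subseteq\theta(O^+(L'_r))$ for every prime $r\neq p,q$; then $\lambda_p$ and $\lambda_q$ are applied not to shrink the discriminant but to normalize the local structures at $p$ and $q$ (in the only nontrivial case $0<\beta<\gamma$ the three distinct Jordan scales already preclude an $\mathbb H$-splitting, so Corollary \ref{notsplit} and your concerns about the bottom of the descent play no role) until $\Z_p^\times$ and $\Z_q^\times$ are also contained in the corresponding spinor norm groups, while $p\cdot q$ still divides the discriminant. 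By \cite[102:9]{OM} the genus of the resulting lattice $\hat L$ consists of a single spinor genus, so $\hat L$ is regular, and the conclusion follows because the complete classification of regular ternary forms in \cite{JKS} shows that the only products of two distinct odd primes dividing discriminants in $\mathcal R$ are those in the asserted set. That argument is uniform in $p$ and $q$ and requires no search bound, no tables, and no pair-by-pair computation.
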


\begin{proof}
Suppose that $p\cdot q$ divides a discriminant in $\mathcal S$ for distinct odd primes $p$ and $q$.  Then there is some spinor regular lattice $L$ with $p\cdot q\mid dL$.  Then by \cite[Lemma 4.1]{CE} there exists a spinor regular ternary lattice $L'$ that behaves well away from $p$ and $q$ and $\ord_p(dL)=\ord_p(dL')$ and $\ord_q(dL)=\ord_q(dL')$.  From \cite[Lemma 3.5]{CE} it follows that $\Z_r^\times\subseteq \theta(O^+(L'_r))$ for any prime, $r$, not equal to $p$ or $q$. 

Locally at $p$ we have $L'_p\cong \lan a,p^\beta b,p^\gamma c\ran$ where $0\leq \beta\leq \gamma$ and $a,b,c\in \Z_p^\times$.  If $\beta=0$ or if $\beta=\gamma$ then $\Z_p^\times\subseteq \theta(O^+(L'_p))$.  Suppose that $0<\beta<\gamma$.  If $\beta=2\beta_0$ is even, then
\[
(\lambda_p^{\beta_0}(L'))_p=\lan a,b,p^{\gamma'}c\ran,
\]
where $\gamma'=\gamma-2\beta_0>0$.  If $\beta=2\beta_0+1$ is odd, then 
\[
(\lambda_p^{\beta_0+\gamma'}(L'))_p=\lan a',pb',c\ran,
\]
where $a'=a, b'=b$ if $\gamma'$ is even, and $a'=b, b'=a$ is $\gamma'$ is odd.  Then setting 
\[
\tilde{L}=\begin{cases}
L' & \text{ if }\beta=0 \text{ or }\beta=\gamma,\\
\lambda_p^{\beta_0}(L') & \text{ if }\beta>0\text{ is even}\\
\lambda_p^{\beta_0+\gamma'}(L') & \text{ if }\beta>0 \text{ is odd},
\end{cases}
\]
we have $p\mid d\tilde{L}$ and $\Z_p^\times\subseteq \theta(O^+(\tilde{L}_p))$ for any choice of $\beta$. Since $\tilde{L}_r$ is just a scaling of $L_r'$ by a unit in $\Z_r^\times$ for primes $r\neq p$, we have $\Z_r^\times\subseteq \theta(O^+(\tilde{L}_r))$ for $r\neq q$ and $\ord_q(d\tilde{L})=\ord_q(dL')$. 

Now we can apply the argument above a second time, by repeatedly applying $\lambda_q$ to $\tilde{L}$ to arrive at $\hat{L}$ for which $q\mid d\hat{L}$ and $\Z_q^\times\subseteq \theta(O^+(\hat{L}_q))$. But now we have $\Z_r^\times\subseteq \theta(O^+(\hat{L}_r))$ at every prime $r$, and it follows from \cite[102:9]{OM} that $\hat{L}$ is regular.  However the only products of distinct odd primes appearing in $\mathcal R$ are in the set
\[
\{3\cdot 5,\,\,\,3\cdot 7,\,\,\,3\cdot 11,\,\,\,3 \cdot 13,\,\,\, 5\cdot 7,\,\,\,11\cdot 13\},
\]
thus proving the claim. 
\end{proof}

\begin{lemma}\label{triples}
Any discriminant in $\mathcal S$ has at most 2 odd prime factors. 
\end{lemma}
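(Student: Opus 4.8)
The plan is to argue by contradiction, first using Lemma~\ref{primepair} to cut the possible prime supports down to a short list, and then eliminating the survivors by the same Watson-descent argument that proves Lemma~\ref{primepair}. Suppose that some discriminant in $\mathcal S$ is divisible by three distinct odd primes $p_1,p_2,p_3$. Applying Lemma~\ref{primepair} to each of the three pairwise products $p_ip_j$ shows that every one of them lies in the set $\{3\cdot 5,\,3\cdot 7,\,3\cdot 11,\,3\cdot 13,\,5\cdot 7,\,11\cdot 13\}$. Regarding this set as the edge set of a graph on the vertices $\{3,5,7,11,13\}$, the triple $\{p_1,p_2,p_3\}$ must be a triangle. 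The only edges not incident to $3$ are $\{5,7\}$ and $\{11,13\}$, which are vertex-disjoint, so no triangle avoids $3$; and the only edges among $\{5,7,11,13\}$ are those same two. Hence the only triangles are $\{3,5,7\}$ and $\{3,11,13\}$, and it suffices to rule out the possibility that $3\cdot 5\cdot 7$ or $3\cdot 11\cdot 13$ divides a discriminant in $\mathcal S$.

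To eliminate these two cases I would run the descent of Lemma~\ref{primepair} at all three primes at once. Starting from a spinor regular lattice $L$ with $p_1p_2p_3\mid dL$, I would first pass, using the analogue of \cite[Lemma 4.1]{CE} for the three-element set $\{p_1,p_2,p_3\}$, to a spinor regular lattice that behaves well at every prime other than $p_1,p_2,p_3$, so that $\Z_r^\times\subseteq\theta(O^+(L_r))$ for every such $r$ by \cite[Lemma 3.5]{CE}. Then, exactly as in the proof of Lemma~\ref{primepair}, I would iterate $\lambda_{p_1}$ until the local structure at $p_1$ takes the form $\lan a,b,p_1^{\gamma'}c\ran$ or $\lan a',p_1 b',c\ran$, which forces $\Z_{p_1}^\times\subseteq\theta(O^+(L_{p_1}))$ while keeping $p_1\mid dL$, and similarly for $p_2$ and then $p_3$.

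The key point making the three-prime descent legitimate is that each $\lambda_{p_j}$ acts on the localization at a prime $r\neq p_j$ only by rescaling $L_r$ by a unit of $\Z_r^\times$, and unit scaling leaves $\theta(O^+(\cdot_r))$ unchanged. Consequently the coverage $\Z_{p_1}^\times\subseteq\theta(O^+(\cdot_{p_1}))$ achieved in the first stage, together with the coverage at all $r\notin\{p_1,p_2,p_3\}$, persists while I treat $p_2$ and $p_3$; moreover the power of each $p_i$ remains positive throughout, since a transformation at one prime does not alter the discriminant exponent at the others. The result is a lattice $\hat L$ with $\Z_r^\times\subseteq\theta(O^+(\hat L_r))$ at every prime $r$, hence regular by \cite[102:9]{OM}, and with $p_1p_2p_3\mid d\hat L$. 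But as already observed in the proof of Lemma~\ref{primepair}, the only products of distinct odd primes dividing a discriminant in $\mathcal R$ are the six pairs listed there, so in particular neither $3\cdot 5\cdot 7$ nor $3\cdot 11\cdot 13$ divides a discriminant in $\mathcal R$. This contradiction finishes the proof.

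The step I expect to be the main obstacle is confirming that the descent truly goes through with three active primes simultaneously: that one can produce a single spinor regular lattice behaving well away from all of $p_1,p_2,p_3$, and that the successive Watson transformations at the three primes do not interfere. The essential verification is that filling $\theta(O^+(\cdot_{p_1}))$ with $\Z_{p_1}^\times$ is not undone by the later transformations at $p_2$ and $p_3$, which rests precisely on the unit-rescaling property noted above and on the fact that the power of each prime is controlled independently by Lemma~\ref{order}.
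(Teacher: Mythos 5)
Your proof is correct and follows essentially the same route as the paper's: reduce via Lemma~\ref{primepair} to the triples $\{3,5,7\}$ and $\{3,11,13\}$, then apply the descent argument of Lemma~\ref{primepair} at all three primes of a lattice behaving well away from them, producing a regular lattice whose discriminant is divisible by the triple product and contradicting the fact (read off from the list in \cite{JKS}) that no such regular lattice exists. Your write-up simply makes explicit two points the paper leaves implicit, namely the triangle argument narrowing the triples and the unit-rescaling reason why the Watson transformations at the three primes do not interfere.
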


\begin{proof}
Suppose there is a discriminant in $\mathcal{S}$ which has distinct odd prime factors $p,q$ and $r$.  Then from Lemma \ref{primepair} it must follow that $(p,q,r)\in \{(3,5,7),(3,11,13)\}$.  Then there is some spinor regular lattice $L$ which behaves well away from $p,q$ and $r$ and $p\cdot q\cdot r\mid dL$.  Then we can repeatedly apply the argument from Lemma \ref{primepair} to obtain a regular lattice $K$ with $p\cdot q\cdot r\mid dK$, but for either triple, no such regular lattice exists. 
\end{proof}

\begin{lemma}\label{primefactor}
If $p$ is an odd prime that divides a discriminant in $\mathcal S$ then $p\in \{3,5,7,11,13\}$. 
\end{lemma}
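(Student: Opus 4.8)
The plan is to specialize the single-prime reduction carried out in the first half of the proof of Lemma~\ref{primepair} to show that every odd prime dividing a discriminant in $\mathcal S$ must in fact divide a discriminant in $\mathcal R$. Since a direct inspection of the $913$ forms listed in \cite{JKS} shows that the only odd primes occurring in the factorizations of discriminants in $\mathcal R$ are $3,5,7,11$ and $13$, this will suffice; in particular it renders obsolete the values $p=17,23$ left open by \cite[Proposition 5.4]{CE}.

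First I would fix an odd prime $p$ dividing some discriminant in $\mathcal S$ and choose a spinor regular lattice $L$ with $p\mid dL$. Applying \cite[Lemma 4.1]{CE}, I would replace $L$ by a spinor regular lattice $L'$ that behaves well away from $p$ and satisfies $\ord_p(dL')=\ord_p(dL)\geq 1$; by \cite[Lemma 3.5]{CE} this guarantees $\Z_r^\times\subseteq\theta(O^+(L'_r))$ for every prime $r\neq p$. Since $p$ is odd, $L'_p\cong\lan a,p^\beta b,p^\gamma c\ran$ with $a,b,c\in\Z_p^\times$ and $0\leq\beta\leq\gamma$, and I would now run exactly the $\lambda_p$-descent from Lemma~\ref{primepair}: when $\beta=0$ or $\beta=\gamma$ the localization already has a Jordan component of rank at least two, so $\Z_p^\times\subseteq\theta(O^+(L'_p))$ and no reduction is needed; otherwise repeated application of $\lambda_p$ (legitimized at each step by Lemma~\ref{descent}) produces a spinor regular $\tilde L$ whose localization at $p$ is $\lan a,b,p^{\gamma-\beta}c\ran$ when $\beta$ is even and $\lan a',pb',c\ran$ when $\beta$ is odd. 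In either terminal case a positive power of $p$ survives, so $p\mid d\tilde L$, and the rank-two unimodular component again forces $\Z_p^\times\subseteq\theta(O^+(\tilde L_p))$.

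Because each $\lambda_p$ leaves $L'_r$ a unit scaling for $r\neq p$, the inclusion $\Z_r^\times\subseteq\theta(O^+(\tilde L_r))$ persists at all primes $r\neq p$, and we have just arranged it at $p$ as well. Thus $\Z_r^\times\subseteq\theta(O^+(\tilde L_r))$ holds at every prime, so \cite[102:9]{OM} shows that $\tilde L$ is regular. Since $p\mid d\tilde L$, the prime $p$ divides a discriminant in $\mathcal R$, whence $p\in\{3,5,7,11,13\}$.

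The step that requires the most care is the bookkeeping inside the descent, namely checking that the hypotheses of Lemma~\ref{descent} remain satisfied throughout and that we neither exhaust the power of $p$ prematurely nor get stranded on a lattice split by $\mathbb H$. This is controlled by observing that, for odd $p$, every intermediate lattice in the chain retains a rank-one unimodular Jordan component $\lan a\ran$ and hence cannot be split by $\mathbb H$, so $\lambda_p$ applies; meanwhile the process is designed to halt the moment a rank-two Jordan component appears, which is precisely when the full unit group enters the spinor norm while a factor of $p$ is still present. Beyond this essentially mechanical verification, the only genuine input is the arithmetic fact, read directly off the list in \cite{JKS}, that neither $17$ nor $23$ divides the discriminant of any regular ternary form.
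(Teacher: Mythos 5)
Your reduction argument is fine as far as it goes, but it goes exactly as far as \cite[Corollary 4.4]{CE} and no further: it shows that any odd prime dividing a discriminant in $\mathcal S$ must divide a discriminant in $\mathcal R$. The fatal step is the final ``inspection'' claim that the only odd primes dividing discriminants in $\mathcal R$ are $3,5,7,11,13$. That claim is false: the primes $17$ and $23$ \emph{do} divide discriminants of regular ternary forms in the \cite{JKS} list. This is precisely why \cite[Corollary 4.4]{CE} --- which is proved by the very descent-to-a-regular-lattice argument you propose, and whose authors had the \cite{JKS} list in hand --- could only conclude $p\in\{2,3,5,7,11,13,17,23\}$; note that this list omits $19$, which is exactly the signature of a list read off from the regular discriminants rather than from any inequality. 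Since regular forms with discriminants of the form $2^k\cdot 17^m$ and $2^k\cdot 23^m$ exist, no argument that merely lands on a regular lattice with $p\mid d\tilde L$ can ever exclude $p=17$ or $p=23$; your proof therefore establishes a weaker statement than the lemma.

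The paper's proof has to work harder, and the extra ingredients are ones your argument never touches. Taking $p=17$ (the case $p=23$ is parallel), the paper uses Lemma \ref{primepair} to force $dL=2^k\cdot 17^m$, then uses \cite[Proposition 5.4]{CE} to pin down $L_{17}$ as $\lan a,b,17c\ran$ or $\lan a,17b,17c\ran$, so that $\Z_{17}^\times\subseteq\theta(O^+(L_{17}))$ automatically. At this point the \emph{non-regularity} of $L$ (which your proof never invokes) implies that $L$ must fail to behave well at $2$, i.e.\ $8\mid dL$ and $L_2$ is not split by $\Hy$; otherwise $L$ would have full unit spinor norms everywhere and be regular. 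Then repeated application of $\lambda_2$ (legitimate because non-splitting by $\Hy$ persists) drives the discriminant into the window $2^k\cdot 17^m$ with $7\leq k\leq 10$, $m\in\{1,2\}$, which lies below Jagy's exhaustive search bound of $575{,}000$ and is verified to contain no member of $\SR$ --- neither of Table \ref{Tab:1} nor of the \cite{JKS} list. The contradiction thus rests essentially on the computational search bound together with a check of \emph{specific} discriminant values against both lists, not on the mere fact that $17$ and $23$ could be transferred into $\mathcal R$; indeed they can be, which is the whole difficulty.
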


\begin{proof}
It is already shown in \cite[Corollary 4.4]{CE} that the only primes dividing a discriminant in $\mathcal S$ are those in the set $\{2,3,5,7,11,13,17,23\}$.  Here we will show that $17$ and $23$ also cannot divide a discriminant in $\mathcal S$.  

Suppose $L$ is a spinor regular (but not regular) lattice and $17\mid dL$.  It follows from Lemma \ref{primepair} that $dL$ does not have any other odd prime divisors, so $dL=2^k\cdot 17^m$ and $L$ behaves well away from 2 and 17. From \cite[Proposition 5.4]{CE} we know that $L_{17}$ has the local structure $\lan a,b,17c\ran$ or $\lan a,17b,17c\ran$ where $a,b,c\in \Z_{17}^\times$, and consequently $\Z_{17}^\times\subseteq \theta(O^+(L_{17}))$.  Therefore we may assume that $8\mid dL$ and $L_2$ is not split by $\mathbb H$, or else $L$ would be regular.  If $3\leq k\leq 10$, then we are done since $dL\leq 2^{10}\cdot 17^2<575,000$ but no discriminant of the form $2^k\cdot 17^m$ appears in Table 1.  If $k>3$, then it follows from \cite[Proposition 3.2]{CE} that $\lambda_2(L)$ is spinor regular with $k-4\leq \ord_2(d(\lambda_2(L)))\leq k-1$.  Since $L_2$ is not split by $\mathbb H$ then $(\lambda_2(L))_2$ is also not split by $\mathbb H$, so we can continue to descend by $\lambda_2$ until we reach a discriminant of the form $2^k\cdot 17^m$ with $7\leq k\leq 10$ and $m=1,2$.  At this point we are within our search bound of 575,000, but since such a $2^k\cdot 17^m$ is not in $\SR$, we have reached a contradiction.  Therefore, 17 does not divide any discriminant in $\mathcal S$. 

Next, suppose that $L$ is a spinor regular (but not regular) lattice and $23\mid dL$.  Again it follows from Lemma \ref{primepair} that $dL$ does not have any other odd prime divisors, so $dL=2^k\cdot 23^m$, and again it follows from \cite[Lemma 2.5]{CE}\ and \cite[Proposition 5.4]{CE} that $L_{23}$ has the local structure $\lan a,b,23c\ran$ or $\lan a,23b,23c\ran$ where $a,b,c\in \Z_{23}^\times$.  From here the argument proceeds precisely as above.
\end{proof}

\begin{lemma}\label{odd}
Suppose $p$ is an odd prime and $d_0$ is an integer with $\gcd(p,d_0)=1$.  If there exists an integer $t>r_p-3$ such that $p^{t}d_0,\,\,p^{t+1}d_0,\,\,p^{t+2}d_0$ and $p^{t+3}d_0$ are all not in $\mathcal{S}$ or $\mathcal{R}$, then $p^{t_0}d_0\not\in \mathcal{S}$ for all $t_0\geq t$.  
\end{lemma}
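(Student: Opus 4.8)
The plan is to argue by strong induction on the exponent $t_0\ge t$, using the Watson descent $\lambda_p$ to reduce the power of $p$ step by step until we land in the window $\{t,t+1,t+2,t+3\}$ supplied by the hypothesis. Write $P(t_0)$ for the assertion $p^{t_0}d_0\notin\mathcal{S}$. The four values $t\le t_0\le t+3$ serve as immediate base cases: by assumption $p^{t_0}d_0\notin\mathcal{S}\cup\mathcal{R}$, so in particular $p^{t_0}d_0\notin\mathcal{S}$.

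For the inductive step, fix $t_0\ge t+4$ and suppose $P(s)$ holds for every $t\le s<t_0$. Assume, for contradiction, that $p^{t_0}d_0\in\mathcal{S}$, and choose a spinor regular, non-regular ternary lattice $L$ with $dL=p^{t_0}d_0$. The hypothesis $t>r_p-3$ gives $t\ge r_p-2$, whence $\ord_p(dL)=t_0\ge t+4\ge r_p+2>r_p$; by Corollary \ref{notsplit} the lattice $L_p$ is not split by $\mathbb{H}$, and certainly $p^2\mid dL$. Lemma \ref{descent} then applies and shows that $M:=\lambda_p(L)$ is spinor regular with $\ord_p(dM)=t_0-\delta$ for some $\delta\in\{1,2,4\}$ (the three decrements arising from the cases $\beta=1$, $\beta=0$, and $\beta\ge 2$, respectively); since $\lambda_p$ leaves the powers of every prime $q\ne p$ unchanged, we have $dM=p^{s}d_0$ with $s:=t_0-\delta$. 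As the decrement never exceeds $4$ and $t_0\ge t+4$, we obtain $t\le t_0-4\le s\le t_0-1<t_0$.

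The crux is to dispose of $M$ according to whether or not it is regular. If $M$ is not regular, then $dM=p^{s}d_0\in\mathcal{S}$, contradicting the inductive hypothesis $P(s)$. If instead $M$ is regular, then $p^{s}d_0\in\mathcal{R}$, so $s\le r_p$ by the very definition of $r_p$ as the largest exponent of $p$ occurring among the discriminants in $\mathcal{R}$; combined with $s\ge t\ge r_p-2$ this forces $s\in\{t,t+1,t+2\}$, so $p^{s}d_0$ is one of the four excluded discriminants, contradicting the assumption that none of them lies in $\mathcal{R}$. In either case we reach a contradiction, establishing $P(t_0)$ and completing the induction.

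I expect the genuine obstacle to be exactly this last dichotomy: the descent $\lambda_p$ can a priori transform a spinor-regular-but-not-regular lattice into a regular one, while the inductive hypothesis only controls membership in $\mathcal{S}$. The hypothesis $t>r_p-3$ is precisely what resolves this — it guarantees $r_p\le t+2$, so any regular lattice produced by the descent has $p$-exponent $s\le r_p\le t+2$ and therefore falls inside the excluded window — and the width-four window matches the maximum possible decrement of $\lambda_p$, ensuring that no single descent step can jump over all four excluded exponents. The remaining points, namely that the decrement always lies in $\{1,2,4\}$ (so that $s\ge t_0-4$) and that the prime-to-$p$ part of the discriminant is preserved exactly, are routine and are furnished directly by Lemma \ref{descent} together with the remark following Lemma \ref{descent2}.
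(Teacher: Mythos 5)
Your proof is correct and follows essentially the same route as the paper's: a minimal-counterexample/strong-induction argument on the exponent of $p$, Corollary \ref{notsplit} to rule out the hyperbolic splitting, and the $\lambda_p$ descent of Lemma \ref{descent} with decrements in $\{1,2,4\}$ caught by the width-four window of excluded exponents. If anything, your explicit dichotomy --- $\lambda_p(L)$ not regular (so its discriminant lies in $\mathcal{S}$, contradicting the induction hypothesis) versus $\lambda_p(L)$ regular (so $s\le r_p\le t+2$ forces the discriminant into the excluded window, contradicting the hypothesis on $\mathcal{R}$) --- is treated more carefully than in the paper, whose proof simply asserts ``contradicting the minimality of $t_0$'' and leaves implicit the regular case, which is precisely why the lemma's hypothesis excludes membership in $\mathcal{R}$ and requires $t>r_p-3$.
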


\begin{proof}
If the result is not true, then there exists some minimal integer $t_0>t>r_p-3$ such that $p^{t_0}d_0\in \mathcal{S}$, while none of $p^{t}d_0,\,\,p^{t+1}d_0,\,\,p^{t+2}d_0$ and $p^{t+3}d_0$ are in $\mathcal{S}$.  Then it must be the case that $t_0\geq t+4>r_p$.  Locally at $p$, we have $L_p\cong\lan a,b^\beta,c^\gamma\ran$ with $a,b,c\in \Z_p^\times$, $0\leq \beta\leq \gamma$ and $\beta+\gamma=t_0>r_p$.  From Corollary \ref{notsplit} we know that $L_p$ is not split by $\mathbb{H}$ and therefore spinor regularity is preserved under the transformation $\lambda_p$, giving 
\[
\lambda_p(L_p)\cong \begin{cases}
\lan a,b,p^{\gamma-2}\ran& \text{ when $\beta=0$ and $\gamma\geq 2$}\\
\lan b,pa,p^{\gamma-1}c\ran & \text{ when $\beta =1$}\\
\lan a,p^{\beta-2}b,p^{\gamma-2}c\ran & \text{ when $\beta\geq 2$},
\end{cases}
\]
which in any case is spinor regular, and 
\[
\ord_p(d(\lambda_p(L_p)))=\begin{cases}
t_0-2& \text{ when $\beta=0$ and $\gamma\geq 2$}\\
t_0-1 & \text{ when $\beta =1$}\\
t_0-4 & \text{ when $\beta\geq 2$}.
\end{cases}
\]
But if $s=\ord_p(d(\lambda_p(L_p)))$, then we have $t\leq s<t_0$, contradicting the minimality of $t_0$.
\end{proof}

\begin{lemma}\label{even}
Suppose $d_0$ is an odd integer.  If there exists an integer $t>2$ such that $2^{t}d_0,\,\,2^{t+1}d_0,\,\,2^{t+2}d_0$ and $2^{t+3}d_0$ are all not in $\mathcal{S}$ or $\mathcal{R}$, then $2^{t_0}d_0\not\in \mathcal{S}$ for all $t_0\geq t$.  
\end{lemma}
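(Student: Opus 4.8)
The plan is to run the prime-$2$ analogue of the descent in Lemma~\ref{odd}, the one genuine novelty being the factor of $2$ relating the two discriminants: if $disc(L)=2^{t_0}d_0$ with $d_0$ odd then $\ord_2(dL)=t_0+1$. This one-unit shift is exactly what calibrates the hypothesis to $t>2$, so I would track $\ord_2(dL)$ and $\ord_2(disc(L))$ together throughout. Arguing by contradiction, suppose $2^{t_0}d_0\in\mathcal S$ for some $t_0\geq t$. If $t_0\in\{t,t+1,t+2,t+3\}$ this already contradicts the hypothesis, so I may assume $t_0\geq t+4$. Fix a spinor regular lattice $L^{(0)}$ with $disc(L^{(0)})=2^{t_0}d_0$, and write $a_k=\ord_2(disc(L^{(k)}))$, so that $a_0=t_0$ and $\ord_2(dL^{(0)})=a_0+1\geq t+5\geq 8=r_2$, using $t>2$.

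Next I would iterate $\lambda_2$. The engine is the following: whenever $a_k\geq t+4$, the hypothesis $t>2$ forces $a_k\geq 7$, hence $\ord_2(dL^{(k)})=a_k+1\geq 8=r_2$, so by Corollary~\ref{notsplit} the lattice $L^{(k)}_2$ is not split by $\mathbb H$; then \cite[Proposition 3.2]{CE} yields a spinor regular $L^{(k+1)}:=\lambda_2(L^{(k)})$ with $a_k-4\leq a_{k+1}\leq a_k-1$. Here only non-splitting by $\mathbb H$ is needed, so the possibility that $L^{(k)}_2$ is split by $\mathbb A$ (not covered by the explicit formulas of Lemma~\ref{descent2}) requires no separate treatment: the uniform bound on the drop in $\ord_2$ is exactly what \cite[Proposition 3.2]{CE} provides.

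Since $a_0>a_1>\cdots$ strictly decreases, there is a first index $m$ with $a_m\leq t+3$; for all $k<m$ one has $a_k\geq t+4$, so every preceding descent step is legitimate. Because $a_{m-1}\geq t+4\geq 7$ the final step is legitimate as well, and since each step lowers $\ord_2$ by at most $4$ we have $a_m\geq a_{m-1}-4\geq t$; thus $a_m\in\{t,t+1,t+2,t+3\}$. But $L^{(m)}$ is spinor regular with $disc(L^{(m)})=2^{a_m}d_0$, so $2^{a_m}d_0\in\mathcal S\cup\mathcal R$ with $a_m$ in the forbidden window, contradicting the hypothesis and completing the proof.

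The step I expect to be the main obstacle is this calibration. In contrast to the odd-prime case, a single descent followed by a minimality argument does not suffice, since the descended lattice might be regular, which would force me to bound separately the power of $2$ dividing the discriminants in $\mathcal R$; iterating the descent sidesteps this, but only because I can certify $\ord_2(dL^{(k)})\geq r_2=8$ at every step down to the window $\{t,\dots,t+3\}$. That the width-$4$ window matches the maximal per-step drop of $4$, and that the factor-of-$2$ shift upgrades $a_k\geq t+4$ to $\ord_2(dL^{(k)})\geq 8$, are precisely the facts that keep each descent valid; confirming that this bookkeeping is tight is where I would focus attention.
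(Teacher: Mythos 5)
Your proof is correct, and it runs on the same engine as the paper's: the calibration $\ord_2(dL)=t_0+1\geq t+5\geq 8=r_2$, Corollary \ref{notsplit}, and $\lambda_2$-descent whose per-step drop in $\ord_2$ is at most $4$ against a forbidden window of width $4$. The difference is in the logical scaffolding, and it is a substantive one. The paper takes $t_0$ minimal with $2^{t_0}d_0\in\mathcal S$, descends \emph{once}, and declares a contradiction with minimality; but spinor regularity of $\lambda_2(L)$ only places its discriminant in $\mathcal S\cup\mathcal R$, so if the descended discriminant were $2^{t'}d_0\in\mathcal R$ with $t'\geq t+4$, the minimality of the $\mathcal S$-counterexample would not be contradicted -- this is precisely the looseness you flag in your final paragraph. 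Your iterated descent sidesteps it: each step uses only spinor regularity (a property regular lattices also enjoy), so the iteration continues legitimately until the exponent first lands in $\{t,\dots,t+3\}$, where the hypothesis -- which excludes membership in $\mathcal R$ as well as in $\mathcal S$ -- gives the contradiction. Indeed, the lemma's hypothesis mentions $\mathcal R$ exactly because the descent cannot distinguish $\mathcal S$ from $\mathcal R$, and your argument is the one that actually uses this. Equivalently, the paper's one-step argument is repaired by choosing $t_0\geq t$ minimal with $2^{t_0}d_0\in\mathcal S\cup\mathcal R$; your induction and that repaired minimal-counterexample argument are the same proof. What your version buys is that every step's validity (via $a_k+1\geq 8$) and the impossibility of jumping over the width-$4$ window are made explicit, whereas the paper's write-up leaves the regular-descendant case unaddressed.
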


\begin{proof}
If the result is not true, then there exists some smallest integer $t_0$ exceeding $t$ for which $2^{t_0}d_0$ lies in $\mathcal S$.  Let $L$ be a spinor regular lattice with the corresponding discriminant, $dL=2^{t_0+1}d_0$.  By the assumption that none of $2^{t}d_0,\,\, 2^{t+1}d_0,\,\,2^{t+2}d_0$ and $2^{t+3}d_0$ lie in $\mathcal{S}$ it must be the case that $t_0\geq t+4\geq 7$; hence $t_0+1\geq t+5\geq 8$.  Therefore it follows from Corollary \ref{notsplit} that $L_2$ is not split by $\mathbb H$ and hence $\lambda_2(L)$ is spinor regular with $d(\lambda_2(L))=2^{t_0+1-s}d_0$ where $s\in \{1,4\}$.  But in either case $t_0-s<t_0$, contradicting the minimality of $t_0$. 
\end{proof}

\section{Enumerating the spinor regular discriminants}\label{results}

The proofs that follow will rely on Tables \ref{App:Tab3by5}, \ref{App:Tab3by7} \ref{App:Tab2by3}, \ref{App:Tab2by3by5}, \ref{App:Tab2by3by7b}, and \ref{App:Tab2by5by7}, given in the Appendix.  These tables provide an accounting of discriminants known to appear in $\mathcal R$ or $\mathcal S$, and indicate discriminants that extend beyond Jagy's search bound in \cite{J}.  Throughout this section, $\btu$ will be used to denote a non-square unit in the local ring $\Z_p$.  

\begin{lemma}\label{Lem:1}
The only prime powers appearing in $\mathcal S$ are $7^3$ and $2^t$ with $t\in \{6,7,8,10,11,12,14\}$
\end{lemma}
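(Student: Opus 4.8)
The plan is to treat pure prime powers $disc(f)=p^t$ prime by prime, using Lemma \ref{primefactor} and Lemma \ref{order} to reduce to a finite list of candidate exponents, and then invoking Jagy's exhaustive search below $575{,}000$ together with the descent Lemmas \ref{odd} and \ref{even} to dispose of the tail. Throughout I record that a pure prime power discriminant $disc(f)=p^t$ corresponds to a lattice $L$ with $dL=2p^t$ when $p$ is odd, so that $\ord_p(dL)=t$, and with $dL=2^{t+1}$ when $p=2$.

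For the odd primes, Lemma \ref{primefactor} restricts attention to $p\in\{3,5,7,11,13\}$, and I would first read off the possible values of $t=\ord_p(dL)$ permitted by Lemma \ref{order}: this gives $t\le 2$ for $p=13$, $t\le 3$ for $p=5,11$, $t\in\{1,2,3,5\}$ for $p=7$, and $t\le 13$ for $p=3$. For $p\in\{5,7,11,13\}$ every resulting value satisfies $p^t\le 7^5=16807<575{,}000$, so each lies within Jagy's search range and membership in $\mathcal S$ is decided by inspecting Table 1; only $7^3$ survives. For $p=3$ the candidates $3^1,\dots,3^8$ are again below the search bound and absent from Table 1, hence not in $\mathcal S$, while the tail is handled by Lemma \ref{odd}: since $r_3=5$ is the largest power of $3$ dividing any discriminant in $\mathcal R$, the four consecutive values $3^9,3^{10},3^{11},3^{12}$ lie in neither $\mathcal R$ nor $\mathcal S$ (they are below $575{,}000$, are not in Table 1, and have $\ord_3>r_3$), and $t'=9>r_3-3$; Lemma \ref{odd} then yields $3^{t_0}\notin\mathcal S$ for all $t_0\ge 9$, which together with the bound $t\le 13$ rules out every power of $3$.

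For $p=2$, the values $2^t$ with $t\le 14$ all satisfy $2^t\le 16384<575{,}000$, so by Jagy's search their membership in $\mathcal S$ is exactly their appearance in Table 1, producing $t\in\{6,7,8,10,11,12,14\}$. To eliminate $t\ge 15$ I would apply Lemma \ref{even} with $d_0=1$ and $t'=15$: the four consecutive discriminants $2^{15},\dots,2^{18}$ lie below the search bound and do not appear in Table 1, and by \cite{JKS} they exceed the largest regular ternary discriminant and so lie outside $\mathcal R$; since $t'=15>2$, Lemma \ref{even} gives $2^{t_0}\notin\mathcal S$ for all $t_0\ge 15$. Combining the two ranges yields exactly the asserted set.

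The hard part will be justifying that the chosen windows of four consecutive exponents genuinely lie outside $\mathcal R$. For $p=3$ this is clean, because $r_3=5$ is by definition the largest power of $3$ occurring among regular discriminants, so any pure power $3^t$ with $t\ge 6$ is automatically non-regular. For $p=2$, however, $r_2=8$ was fixed for the unrelated purpose of controlling $\mathbb H$-splitting, so I must instead appeal directly to the explicit list in \cite{JKS} to confirm that no regular ternary form has discriminant $2^{15},\dots,2^{18}$; the auxiliary tables in the Appendix serve to make such $\mathcal R$-membership checks explicit. The spinor-regularity-preservation and not-split-by-$\mathbb H$ hypotheses needed to run each descent are already packaged inside Lemmas \ref{odd} and \ref{even} via Corollary \ref{notsplit}, so no further local analysis is required at this stage.
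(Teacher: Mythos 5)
Your proposal follows essentially the same route as the paper's proof: Lemma \ref{primefactor} and Lemma \ref{order} confine the odd-prime candidates to exponents whose prime powers lie below Jagy's search bound (leaving only $7^3$), and the tails for $p=3$ and $p=2$ are killed by Lemmas \ref{odd} and \ref{even} using windows of four consecutive exponents; your window $3^9,\dots,3^{12}$ differs harmlessly from the paper's $3^6,\dots,3^9$. The one flaw is your in-line justification that $2^{15},\dots,2^{18}$ lie outside $\mathcal R$ because they \emph{exceed the largest regular ternary discriminant}: that claim is false, since $\mathcal R$ contains, for example, $2^{13}\cdot 3^2=73{,}728>2^{16}$ and $2^8\cdot 3^4\cdot 5^2=518{,}400>2^{18}$ (both visible as ``r'' entries in the Appendix tables). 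What is true, and what the paper actually uses, is precisely what your closing paragraph proposes as the fallback: by direct inspection of the list in \cite{JKS}, the largest power of $2$ occurring in $\mathcal R$ is $2^{13}$, so no pure power $2^t$ with $t\ge 15$ lies in $\mathcal R$; with that corrected justification substituted for the false one, your proof is complete and coincides with the paper's.
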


\begin{proof}
Since the largest power of 2 appearing in $\mathcal{R}$ is 13, and $2^{15},2^{16},2^{17}$ and $2^{18}$ are all less than 575,000 but don't appear in Table 1, the conclusion follows immediately from Lemma \ref{even} for powers of 2. 

From Lemma \ref{primefactor} we know that if the power of any odd prime, $p$, appears in $\mathcal{S}$ then $p\in \{3,5,7,11,13\}$.  From Lemma \ref{order} we can immediately rule out all powers of 5,7,11 and 13 except $7^3$ which appears in Table \ref{Tab:1} since all admissible powers fall below the bound of 575,000.  The powers of 3 appearing in $\mathcal R$ are $3,3^2,3^3,3^4,3^5$.  But since $3^9<575,000$ and $3^6,3^7,3^8,3^9\not\in \mathcal S\cup \mathcal R$, we conclude from Lemma \ref{odd} that no powers of 3 appear in $\mathcal S$. 
\end{proof}


%
%

\begin{lemma}\label{Lem:3}
For odd primes $p,q$, there are no elements of the form $p^k\cdot q^m$ in $\mathcal{S}$ with $k,m>0$.  
\end{lemma}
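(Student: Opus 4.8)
The plan is to combine the prime-pair restriction of Lemma~\ref{primepair} with the exponent bounds of Lemma~\ref{order} to produce a finite list of candidate discriminants, and then to push every candidate below Jagy's search bound by means of the descent packaged in Lemma~\ref{odd}. Suppose toward a contradiction that $p^kq^m\in\mathcal S$ for distinct odd primes $p,q$ with $k,m>0$, and let $L$ be a corresponding spinor regular (non-regular) lattice, so that $dL=2p^kq^m$ and $\ord_2(dL)=1$. By Lemma~\ref{primepair} we have $pq\in\{3\cdot5,\,3\cdot7,\,3\cdot11,\,3\cdot13,\,5\cdot7,\,11\cdot13\}$, and Lemma~\ref{order} bounds the odd exponents: $\ord_q(dL)\le 3$ for $q\in\{5,11,13\}$ and $\ord_7(dL)\le 5$, while only $\ord_3(dL)$ can be as large as $13$. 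In particular, for $pq=11\cdot13$ every admissible product $11^k13^m$ is at most $11^3\cdot13^2<575{,}000$, so that case collapses to a direct inspection of Table~\ref{Tab:1}.

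Next I would set up the reduction engine. Whenever $\ord_p(dL)\ge r_p$, Corollary~\ref{notsplit} ensures $L_p$ is not split by $\Hy$, so $\lambda_p$ preserves spinor regularity and lowers $\ord_p$ by the amounts listed in Lemma~\ref{descent}; Lemma~\ref{odd} packages this into the statement that, for fixed prime-to-$p$ part $d_0$, if four consecutive powers $p^td_0,\dots,p^{t+3}d_0$ with $t>r_p-3$ all lie outside $\SR$, then so does $p^{t_0}d_0$ for every $t_0\ge t$. The strategy is therefore, for each admissible value of the small exponent $m$, to exhibit a window of four consecutive powers of the large prime lying below the bound and belonging to neither $\mathcal R$ (read off from \cite{JKS}) nor $\mathcal S$ (known below the bound to consist of Table~\ref{Tab:1}); Lemma~\ref{odd} then disposes of all higher powers at once. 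Since no entry of Table~\ref{Tab:1} is a product of two distinct odd prime powers, membership in $\mathcal S$ is never the difficulty below the bound, and the only obstruction to closing a window is the presence of elements of $\mathcal R$. When the small prime itself carries proper spinor norm, I would first apply $\lambda_q$ to drive its exponent down to the full-norm regime before running the descent on the power of the large prime.

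This is exactly where the six pairs separate. For $pq\in\{3\cdot11,\,3\cdot13,\,5\cdot7\}$ one checks from \cite{JKS} that $\mathcal R$ contains no odd product of the two primes, so a low window such as $k=3,4,5,6$ already lies outside $\SR$ and the descent clears the entire family, the finitely many remaining low-power candidates being settled against Table~\ref{Tab:1} directly; for $5\cdot7$ the one over-bound candidate $5^3\cdot7^5$ is brought inside the bound by a single application of $\lambda_7$. For $pq\in\{3\cdot5,\,3\cdot7\}$, by contrast, $\mathcal R$ does contain odd products $3^a5^b$ and $3^a7^b$, so the windows must be positioned above these scattered $\mathcal R$-entries, and the complete accounting---recording for each pair $(k,m)$ whether $3^k5^m$ (respectively $3^k7^m$) lies in $\mathcal R$, whether it lies below the bound, and hence that it lies outside $\mathcal S$---is what Tables~\ref{App:Tab3by5} and~\ref{App:Tab3by7} provide.

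I expect the principal obstacle to be precisely these pairs $3\cdot5$ and $3\cdot7$: the $\mathcal R$-entries force the four-term windows upward while the factor $5^m$ or $7^m$ forces them back below the search bound, so one must verify that a clean window genuinely exists for every admissible $m$. Here the bound $r_3=5$ is essential, since it confines the $\mathcal R$-entries to $\ord_3\le5$ and thereby guarantees room above them, and the preliminary descent $\lambda_5$ or $\lambda_7$ that collapses a proper-norm small prime to full norm is what keeps the windows inside the bound; the Appendix tables exist to carry out this interlocking bookkeeping. A secondary point to settle is the $2$-adic structure forced by $\ord_2(dL)=1$, namely $L_2\cong\Hy\perp\lan2a\ran$ or $L_2\cong\mathbb A\perp\lan2a\ran$, and to confirm that this fixed local piece neither blocks the descent at the odd primes nor interferes with the final appeal to Jagy's completeness below $575{,}000$ and to the list in \cite{JKS}.
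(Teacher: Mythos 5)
Your proposal is correct and follows essentially the same route as the paper's proof: restrict to the six pairs via Lemma~\ref{primepair}, bound exponents via Lemma~\ref{order}, clear each family with four-term windows from Lemma~\ref{odd} against the Appendix tables, and dispose of the finitely many over-bound candidates (such as $3^8\cdot5^3$, $3^7\cdot7^3$, $3^k\cdot7^5$, $5^3\cdot7^5$) by a $\lambda_p$-descent justified by Corollary~\ref{notsplit} and Lemma~\ref{descent}. The only deviations are cosmetic --- e.g., you reduce $3^7\cdot7^3$ by $\lambda_7$ where the paper uses $\lambda_3$, and your closing remark on the $2$-adic structure is harmless but unnecessary since $\ord_2(dL)=1$ makes $L$ behave well at $2$ automatically.
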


\begin{proof}
If $p^k\cdot q^m\in \mathcal S$, then from Lemma \ref{primepair} we know that $(p,q)$ must come from among 
\[
\{(3,5),(3,7),(3,11),(3,13),(5,7),(11,13)\}.
\]
We will consider these prime pairs one at a time. 

When $(p,q)=(3,5)$ then in view of Lemma \ref{order} we know that we only need to check products of the form $3^k\cdot 5^m$ for $m\leq 3$.  When $m=1$ or 2, then $3^k\cdot 5^m<575,000$ and $3^k\cdot 5^m\not\in \SR$ for $k=5,6,7,8$ so we may conclude from Lemma \ref{odd} that $3^k\cdot 5\not\in \mathcal S$ and $3^k\cdot 5^2\not\in \mathcal S$ for any choice of $k$.  When $m=3$, then $3^k\cdot 5^3$ will only exceed 575,000 when $k=8$.  Suppose that $ 3^8\cdot 5^3\in \mathcal {S}$, so there is a spinor regular (but not regular) lattice $L$ with discriminant $2\cdot 3^8\cdot 5^3$.  From Lemma \ref{order} if follows that $L$ has the local structure $L_5\cong \lan a,5b,5^2c\ran$ with $a,b,c\in \Z_5^\times$, and hence is not split by $\mathbb H$.  Consequently $(\lambda_5(L))_5\cong \lan b,5a,c\ran$ is spinor regular with discriminant $2\cdot 3^8\cdot 5$, but we already know that $3^8\cdot 5\not\in \SR$ and $ 3^8\cdot 5<575,000$, a contradiction.  Therefore, $3^8\cdot 5^3\not\in \SR$ and now it follows from Lemma \ref{odd} that $3^k\cdot 5^3\not\in \SR$ for any choice of $k$. 

When $(p,q)=(3,7)$ then from Lemma \ref{order} we know that we only need to check products of the form $3^k\cdot 7^m$ for $m=1,2,3,5$.  When $m=1$ or 2, then $3^k\cdot 7^m\not\in \SR$ and $3^k\cdot 7^m<575,000$ for $k=5,6,7,8$ and so by Lemma \ref{odd} we know that $3^k\cdot 7\not\in \mathcal S$ and $3^k\cdot 7^2\not\in \mathcal S$ for any choice of $k$.  When $m=3$ then we only need to rule out $3^7\cdot 7^3$, and then the remaining $3^k\cdot 7^3\not\in \mathcal S$ follow from Lemma \ref{odd}.  If $L$ is a spinor regular (but not regular) lattice with $dL=2\cdot 3^7\cdot 7^3$ then $L$ is not split by $\mathbb H$ at 3 by Corollary \ref{notsplit} and consequently $\lambda_3(L)$ is spinor regular with $d(\lambda_3(L))=2\cdot 3^{7-t}\cdot 7^3$ with $t\in \{1,2,4\}$, but we already know that this is not possible since $3^{7-t}\cdot 7^3<575,000$ and $3^{7-t}\cdot 7^3\not\in \SR$ for $t\in \{1,2,4\}$.  Therefore, $3^{7}\cdot 7^3\not\in \mathcal S$ and it follows from Lemma \ref{odd} that $3^k\cdot 7^3\not\in \mathcal S$ for any $k$.  When $m=5$ we only need to rule out $3^k\cdot 7^5$ for $k=4,5,6,7$ to rule out all remaining cases.  But for each of these, we know that $L_7$ is not split by $\mathbb H$, and consequently $\lambda_7(L)$ is spinor regular with $d(\lambda_7(L))=2\cdot 3^k\cdot 7^{5-t}$ for $t\in \{2,4\}$.  But all such discriminants are less than 575,000 and do not appear in $\SR$, and so no such $L$ can exist.  Therefore $3^k\cdot 7^m\not\in \mathcal S$. 

When $(p,q)=(3,11),(3,13),(5,7)$ and $(11,13)$ the argument follows just as above and all discriminants beyond the bound 575,000 can be ruled out by applying Corollary \ref{notsplit} followed by Lemma \ref{descent} and showing that the resulting discriminant is impossible, and then ruling out all remaining cases with Lemma \ref{odd}. 
\end{proof}


%
%

\begin{lemma}
For an odd prime $p$, the only $2^k\cdot p^m$ with $k,m>0$ occurring in $\mathcal S$ are those with $p=3,7$ appearing in Table \ref{Tab:1}. 
\end{lemma}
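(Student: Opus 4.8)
The plan is to fix an odd prime $p$ dividing the discriminant and argue prime by prime. By Lemma~\ref{primefactor} we may assume $p\in\{3,5,7,11,13\}$, and by Lemma~\ref{order} the exponent $m=\ord_p(dL)$ is bounded for each such $p$. Writing a spinor regular lattice $L$ with $disc(f)=2^k p^m$ as having $dL=2^{k+1}p^m$, I would exploit two independent descent directions: $\lambda_2$ reduces $k$ while fixing $m$, and $\lambda_p$ reduces $m$ while fixing $k$. Corollary~\ref{notsplit} guarantees that whenever $\ord_2(dL)\ge r_2=8$ (respectively $\ord_p(dL)\ge r_p$) the relevant local lattice is not split by $\mathbb H$, so that, by Lemmas~\ref{descent} and~\ref{descent2}, the descent preserves spinor regularity precisely in the ranges where I need to push exponents down; the low-exponent cases all lie under Jagy's search bound of $575,000$ and are read off directly.

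For each fixed admissible $m$ I set $d_0=p^m$ and treat the family $\{2^k p^m:k>0\}$. Lemma~\ref{even} collapses such an infinite family to a finite check: exhibiting four consecutive powers $2^t d_0,\dots,2^{t+3}d_0$ (with $t>2$) none of which lies in $\SR$ forces $2^{t_0}d_0\notin\mathcal S$ for all $t_0\ge t$, leaving only small $k$ under the search bound. Symmetrically, when $m$ can be large---which by Lemma~\ref{order} happens only for $p=3$, and marginally for $p=7$ with $m=5$---I first descend in $m$ via $\lambda_p$, exactly as in the proof of Lemma~\ref{Lem:3}, and use Lemma~\ref{odd} to kill the residual infinite families in the $m$ direction. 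Carrying this out: for $p=5,11,13$ no product $2^k p^m$ occurs in Table~\ref{Tab:1}, the admissible $m$ are at most $3$, and every such discriminant is eliminated by Lemma~\ref{even} together with direct inspection below the bound (with a $\lambda_p$-descent invoked when an exponent exceeds it). For $p=7$ the admissible exponents are $m\in\{1,2,3,5\}$: the cases $m=1,2$ are eliminated directly, the case $m=5$ reduces under a single $\lambda_7$-descent ($\beta=2,\gamma=3\mapsto m=1$) to the $m=1$ case, and only $m=3$ survives, where exactly the entries $2^2\cdot7^3$, $2^4\cdot7^3$, $2^8\cdot7^3$ of Table~\ref{Tab:1} remain.

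The prime $p=3$ is the main case and is organized around Table~\ref{App:Tab2by3}, which accounts for every $2^k\cdot3^m$ against $\mathcal R$, $\mathcal S$, and the search bound. Here $m$ ranges up to $13$ by Lemma~\ref{order} while $k$ is a priori unbounded, so one faces a genuine two-parameter grid, and both descent operators $\lambda_2,\lambda_3$ together with both Lemmas~\ref{odd} and~\ref{even} are needed to clear it, leaving precisely the entries of Table~\ref{Tab:1} with $p=3$. In each invocation I would confirm the descent hypothesis---that the relevant local lattice is genuinely not split by $\mathbb H$---by Corollary~\ref{notsplit} in the high-exponent range, and verify the surviving cases against Table~\ref{Tab:1}.

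The main obstacle lies at the boundary, near the discriminants that genuinely survive in Table~\ref{Tab:1}. There the crude four-in-a-row windows of Lemma~\ref{even} cannot be placed entirely below the search bound while avoiding the surviving $\mathcal S$-entries (for instance, for $2^k\cdot7^3$ the surviving values $k\in\{2,4,8\}$ leave no clean window of four consecutive admissible exponents under the bound), and a naive minimal-counterexample descent does not close either, because the $\lambda_2$-reduction $s\in\{1,4\}$ can land a non-surviving discriminant on a surviving one (e.g. a $\lambda_2$-step could in principle carry $k=9$ or $k=12$ down to $k=8$). Resolving these finitely many borderline pairs requires pinning down the actual $2$-adic structure of the candidate lattices---hence the exact value of $s$ and whether $L_2$ splits---via the explicit local computations recorded in the Appendix tables and governed by Lemma~\ref{descent2}. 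This local bookkeeping, showing that every non-Table~\ref{Tab:1} discriminant descends to one already excluded, is where the care is concentrated.
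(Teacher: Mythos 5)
Your outline reproduces the paper's strategy: descend with $\lambda_2$ and $\lambda_p$, invoke Corollary \ref{notsplit} so that the descents preserve spinor regularity, and collapse the infinite families with Lemmas \ref{odd} and \ref{even}, leaving finitely many discriminants just above Jagy's bound to be treated individually. The genuine gap is that you never treat them, and---more seriously---the mechanism you propose for doing so (``showing that every non-Table \ref{Tab:1} discriminant descends to one already excluded'') is not how several of the critical cases can be closed, because there the descent lands on a discriminant that \emph{does} lie in $\SR$, so no contradiction is available at the level of discriminants alone. For example, for $2^{12}\cdot 3^5$ the $\lambda_2$-image has discriminant $2^{11}\cdot 3^5$ or $2^{8}\cdot 3^5$; the first is excluded, but the second lies in both $\mathcal R$ and $\mathcal S$ (it is the Table \ref{Tab:1} entry $62208$). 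The paper's argument at this point identifies the explicit form in \cite{JKS} to which the image corresponds, reads off its $3$-adic structure to constrain the units in a Jordan splitting of $L_3$, then runs a second, independent descent by $\lambda_3$ landing on discriminant $2^{12}\cdot 3$, identifies those \cite{JKS} forms, and derives a contradiction between the two resulting descriptions of $L_3$. Similarly, $2^{6}\cdot 3^{9}$ is killed by a $\lambda_3$-descent onto the regular form $[8,9,56,0,8,0]$ of discriminant $2^6\cdot 3^5\in\mathcal R$, whose $2$-adic structure $2\langle 1\rangle\perp 2^3\mathbb A$ shows that $L_2$ is not split by $\mathbb H$, which then permits a $\lambda_2$-descent to an excluded discriminant; and the pairs $(k,m)=(14,4),(15,3),(16,3),(16,2),(17,2)$ require a computational check that ten specific \cite{JKS} forms are not split by $\mathbb H$ at $3$. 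None of this can be read off from the Appendix tables, which record only whether a discriminant lies in $\mathcal R$, in $\mathcal S$, above the bound, or in neither; the local data come from SageMath computations on the explicit \cite{JKS} candidates, which your proposal never performs, so pointing to the Appendix as the source of the $2$-adic bookkeeping misplaces the actual content of the argument.

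The same defect affects your treatment of $p=7$. You correctly observe that for $2^k\cdot 7^3$ there is no window of four consecutive admissible exponents below the bound, but your fallback is again discriminant-level descent, whereas a $\lambda_2$-image can land on $2^8\cdot 7^3=87808\in\mathcal S$ or on elements of $\mathcal R$; likewise your reduction of $m=5$ ``to the $m=1$ case'' only yields that the $\lambda_7$-image has discriminant $2^k\cdot 7\in\SR$, which is inconclusive whenever $2^k\cdot 7\in\mathcal R$, and ruling that out again requires the explicit local structures of the relevant \cite{JKS} lattices. In short, you have correctly identified where the difficulty sits and named the right tools, but the substance of the paper's proof---the targeted eliminations $(k,m)=(6,9),(12,5),(13,4),(14,4),(15,3),(16,3),(16,2),(17,2)$ for $p=3$, carried out through explicit identification of candidate lattices and their local invariants, together with their analogues for $p=5,7,11,13$---is absent from your proposal, and the shortcut you describe in its place would fail precisely at those discriminants.
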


\begin{proof}
By Lemma \ref{odd} we can immediately rule out all $2^k\cdot 3^m$ not appearing in Table \ref{Tab:1} when $k\leq 5$.  We only need to show that $2^k\cdot 3^m\not\in \mathcal S$ for $(k,m)=(6,9),(12,5),(13,4),(14,4),(15,3),(16,3),(16,2),(17,2)$ and then we can use Lemmas \ref{odd} and \ref{even} to rule out all remaining cases. 

To eliminate $2^6\cdot 3^m$ we only need to rule out $2^6\cdot 3^9$ and then all remaining possibilities can be eliminated using Lemma \ref{odd}.  Suppose that $L$ is a spinor regular (but not regular) lattice with $dL=2^7\cdot 3^9$.  Then from Lemma \ref{order} we know that $L_3\cong \lan a,3^\beta b,3^\gamma c\ran $ where $2\leq \beta\leq \gamma$ and $a,b,c\in \Z_3^\times$. Consequently $\lambda_3(L)$ is spinor regular and $d(\lambda_3(L))=2^7\cdot 3^5$ and hence corresponds to the form $[8,9,56,0,8,0]$ from \cite{JKS}.  This means that locally at 2 we have 
\[
(\lambda_3(L))_2\cong 2\lan 1\ran \perp 2^3\mathbb A,
\]
and hence $L_2$ is not split by $\mathbb H$. But then $\lambda_2(L)$ must be spinor regular and $d(\lambda_2(L))=2^{7-t}\cdot 3^9$ with $t\in \{1,4\}$, which we know does not occur.  Therefore, $2^6\cdot 3^9\not\in \mathcal S$ and using Lemma \ref{odd} we can rule out all $2^6\cdot 3^m$.  At this point we can easily rule out $2^7\cdot 3^m$ using Lemma \ref{odd}, and then $2^k\cdot 3^m$ for $m\geq 6$.

Next we must show that $2^{12}\cdot 3^5\not\in \mathcal S$.  Suppose that $L$ is spinor regular (but not regular) with $dL=2^{13}\cdot 3^5$.  Then from \cite[Proposition 5.5]{CE} we know that $L_2$ is not split by $\mathbb H$, and consequently $\lambda_2(L)$ is spinor regular with $d(\lambda_2(L))=2^{9}\cdot 3^5$. Consequently $\lambda_2(L)$ corresponds to the form $[9,17,113,-14,6,6]$ from \cite{JKS}, and hence
\[
(\lambda_2(L))_3\cong \lan 1\ran\perp 3^2\lan 1 \ran \perp 3^3\lan \btu\ran,
\]
implying 
\[
L_3\cong \lan a\ran\perp 3^2\lan a \ran \perp 3^3\lan b\ran,
\]
where one of $a,b$ is a square in $\Z_3^\times$. Therefore, $L_3$ is not split by $\mathbb H$ and hence $\lambda_3(L)$ is spinor regular with $d(\lambda_3(L))=2^{13}\cdot 3$ and must correspond to one of $[7,20,23,-4,2,4]$ or $[7,23,23,-18,2,2]$ from \cite{JKS}, both of which have the local structure 
\[
(\lambda_3(L))_3\cong \lan \btu,\btu\ran\perp 3\lan \btu \ran,
\]
implying 
\[
L_3\cong \lan \btu\ran \perp 3^2\lan \btu\ran\perp 3^3\lan \btu \ran,
\]
a contradiction. Therefore, $2^{12}\cdot 3^5\not\in \mathcal S$, and from here we can use Lemma \ref{even} to rule out all $2^k\cdot 3^5$. 

The product $2^{13}\cdot 3^4$ can be easily ruled out by Lemma \ref{descent} since $2^{t}\cdot 3^4\not\in \SR$ for $t=9,11$ or $12$.  Now it only remains to eliminate $2^{14}\cdot 3^4$, $2^{15}\cdot 3^3$, $2^{16}\cdot 3^3$, $2^{16}\cdot 3^2$ and $2^{17}\cdot 3^2$.  In each of these cases, for the corresponding lattice $L$, we know from Corollary \ref{notsplit} that $L_2$ is not split by $\mathbb H$ and consequently $\lambda_2(L)$ is spinor regular and must correspond to one of the forms [16,19,76,4,16,8], [13,16,52,16,4,8], [13,21,37,6,10,6], [17,20,57,12,6,4], [17,32,41,16,10,16], [23,32,47,16,22,16], [11,32,44,-16,4,8]$,  $[11,35,44,28,4,10], [15,20,56,16,0,12] or [15,23,44,-4,12,6] from \cite{JKS}. Using SageMath \cite{sage} we easily see that none of these forms are split by $\mathbb H$ at 3, and consequently in each case $\lambda_3(L)$ is also spinor regular. But $\ord_2(d(\lambda_3(L)))\geq 14$, which we know cannot happen, thus ruling out all remaining cases. 

Proofs for $p=5,7,11,13$ proceed similarly. 
\end{proof}


\begin{lemma}\label{lemma235}
There are no elements of the form $2^k\cdot 3^m\cdot 5^n$ in $\mathcal S$ with $k,m,n>0$. 
\end{lemma}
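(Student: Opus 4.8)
The plan is to follow the template of Lemma \ref{Lem:3} and the preceding lemma, combining the local bounds of Lemma \ref{order} with the descent Lemmas \ref{odd}, \ref{even}, \ref{descent} and \ref{descent2}, and using Table \ref{App:Tab2by3by5} for the bookkeeping. Suppose $L$ is spinor regular but not regular with $dL=2^{k+1}\cdot 3^{m}\cdot 5^{n}$, so that $2^{k}\cdot 3^{m}\cdot 5^{n}\in\mathcal S$ with $k,m,n>0$. Applying Lemma \ref{order} at $p=5$ forces $n\le 3$, and the same lemma at $p=3$ bounds $m$; moreover $n=3$ is possible only for the local type $L_5\cong\lan a,5b,5^2c\ran$ with $a,b,c\in\Z_5^{\times}$.

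First I would reduce the power of $5$. Since $r_5=3$, whenever $n=3$ Corollary \ref{notsplit} shows $L_5$ is not split by $\mathbb H$, so by Lemma \ref{descent} the lattice $\lambda_5(L)$ is spinor regular with $\ord_5(d(\lambda_5(L)))=2$. Hence $2^{k}\cdot 3^{m}\cdot 5^{3}\in\mathcal S$ would force $2^{k}\cdot 3^{m}\cdot 5^{2}\in\mathcal S$, and it suffices to rule out the cases $n\in\{1,2\}$.

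Next, for each fixed $n\in\{1,2\}$ and each admissible $m$, I would use Lemma \ref{even} on the power of $2$ and Lemma \ref{odd} on the power of $3$ to collapse all sufficiently large exponents: once four consecutive discriminants lie outside $\mathcal S\cup\mathcal R$, every larger exponent is excluded. This leaves only a short explicit list of triples $(k,m,n)$ whose discriminant $2^{k}\cdot 3^{m}\cdot 5^{n}$ exceeds Jagy's search bound of $575{,}000$ — precisely the entries flagged in Table \ref{App:Tab2by3by5} — together with the finitely many triples below the bound, which are dispatched by inspection of Table \ref{Tab:1} and the list in \cite{JKS}.

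Each surviving triple above the bound is eliminated exactly as in the $2^{12}\cdot 3^{5}$ computation of the preceding lemma. For such a triple the relevant order meets $r_2=8$ or $r_3=5$, so Corollary \ref{notsplit} shows $L_2$ (respectively $L_3$) is not split by $\mathbb H$, and the appropriate $\lambda_2$ or $\lambda_3$ (Lemma \ref{descent2} or \ref{descent}) produces a spinor regular lattice of strictly smaller discriminant. If that discriminant is below the bound and absent from Table \ref{Tab:1} and \cite{JKS} we are done; otherwise it must coincide with an explicit form from \cite{JKS}, whose local structure at the remaining primes I compute in SageMath and trace back through the $\lambda_p$ action to force incompatible local data on $L$. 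The hard part will be the last step: organizing the chains of descents so that every boundary triple reduces either to a manifestly impossible discriminant or to a specific \cite{JKS} form whose local data contradict the structure of $L$. As before, these contradictions turn on the simultaneous compatibility of the Jordan splittings at $3$ and $5$ (for instance which unit coefficient is a square, or whether a component is $\lan\btu\ran$ rather than $\lan 1\ran$), and the presence of the extra prime $5$ is what makes the bookkeeping most delicate; the explicit SageMath computations of local structures are what keep it tractable.
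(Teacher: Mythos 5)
Your proposal follows the same overall route as the paper: bound $n\le 3$ by Lemma \ref{order}, collapse large exponents with Lemmas \ref{odd} and \ref{even}, and eliminate the boundary triples by $\lambda_p$-descent onto explicit forms of \cite{JKS}, comparing local structures. However, there is a genuine gap at your first step. From the fact that $L_5\cong\lan a,5b,5^2c\ran$ is not split by $\mathbb H$ you may conclude that $\lambda_5(L)$ is spinor regular with $\ord_5(d(\lambda_5(L)))=2$, but this only places the discriminant $2^k\cdot 3^m\cdot 5^2$ in $\mathcal S\cup\mathcal R$, not in $\mathcal S$: the descended lattice may perfectly well be \emph{regular}. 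Hence ruling out $n\in\{1,2\}$ from $\mathcal S$ does \emph{not} dispose of $n=3$, and this loophole actually occurs: for $(k,m)=(6,4)$ and $(6,5)$ the products $2^k\cdot 3^m\cdot 5^2$ lie in $\mathcal R$ (the forms $[27,40,43,40,18,0]$ and $[9,41,281,-38,6,6]$ of \cite{JKS}). For this reason the paper treats $n=3$ as a case of its own, eliminating $(k,m)=(6,4),(6,5),(8,3),(8,4),(10,1),(10,2)$ individually by identifying the regular lattice that $\lambda_5(L)$ would have to be and propagating its local data back to $L$ at the primes $2$ and $3$. Your plan, having declared $n=3$ reduced away, never touches these triples; note also that for some of them neither $k\ge r_2$ nor $m\ge r_3$ holds (e.g.\ $(6,4)$), so your later blanket claim that every surviving triple meets the threshold $r_2=8$ or $r_3=5$ fails precisely where the gap sits.

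Second, even where your endgame template applies, it omits one kind of closing argument the paper needs. For $(k,m,n)=(10,1,3)$ the chain of descents pins down \emph{all} localizations of $L$ (in particular $L_2\cong 2\lan 5\ran\perp 2^5\mathbb A$), and the contradiction is not a mismatch of Jordan data at all: one checks via \cite[Lemma 1]{H75} that $\Z_p^\times\subseteq\theta(O^+(L_p))$ for every prime $p$, whence $L$ would be regular by \cite[102:9]{OM}, contrary to the hypothesis that $L$ is spinor regular but not regular. Relatedly, when $n=2$ and $L_5\cong\lan a,b,5^2c\ran$ is split by $\mathbb H$, no $\lambda_5$-descent is available and Corollary \ref{notsplit} gives nothing at $5$; the paper instead argues that $L_2$ or $L_3$ must fail to be split by $\mathbb H$ (else $L$ would again be regular), descends there, and uses an exhaustive SageMath check that every candidate form of \cite{JKS} has $5$-adic structure $\lan a\ran\perp 5\lan b,c\ran$, which cannot be a unit scaling of an $\mathbb H$-split lattice. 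So beyond repairing the $n=3$ reduction, your outline needs this regularity-via-spinor-norms contradiction as a second type of endgame, not just the tracing of incompatible local splittings.
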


\begin{proof}
From Lemma \ref{order} we know that $2^k\cdot 3^m\cdot 5^n$ can only be in $\mathcal S$ when $n\leq 3$.  To begin, we will consider discriminants of the form $2^k\cdot 3^m\cdot 5$.  From Lemma \ref{odd} we can immediately eliminate all $2^k\cdot 3^m\cdot 5$ for $k=1,2,3,4,5,7$.  Now we only need to show that $2^k\cdot 3^m\cdot 5\not\in \mathcal S$ for $(k,m)=(6,7),(6,8),(6,9),(9,5),(10,5),(11,4),(12,4),(13,3),$ $(14,3),(14,2)$ and then we can use Lemmas \ref{odd} and \ref{even} to eliminate all remaining $2^k\cdot 3^m\cdot 5$. 

Suppose that $L$ is a spinor regular (but not regular) lattice with $dL=2^{k+1}\cdot 3^m\cdot 5$.  We can immediately rule out $(k,m)=(9,5),(11,4),(13,3)$, since in these cases $L_2$ is not split by $\mathbb H$ and hence $\lambda_2(L)$ is spinor regular with $d(\lambda_2(L))=2^{k+1-t}\cdot 3^m\cdot 5$ for $t\in \{1,4\}$, which we know cannot happen. 

In cases where $(k,m)=(6,m)$, then $dL=2^{7}\cdot 3^m\cdot 5$ and from Lemma \ref{order} it follows that $L_3\cong \lan a,3^\beta b,3^\gamma c\ran$ where $2\leq \beta\leq \gamma$ and $a,b,c\in \Z_3^\times$.  Hence $L_3$ is not split by $\mathbb H$, and $d(\lambda_3(L))=2^{k+1}\cdot 3^{m-4}\cdot 5$.  Therefore, $\lambda_3(L)$ corresponds to one of [12,13,21,6,12,12], [13,13,13,2,2,2], [5,5,92,-4,4,2], [7,18,19,6,2,6] or [7,19,19,14,2,2] when $m=7$, [8,23,39,6,0,8] when $m=8$ and [13,28,61,28,2,4] when $m=9$, none of which are split by $\mathbb H$ at 2. Therefore in any case $\lambda_2(L)$ is spinor regular with $d(\lambda_2(L))=2^{3}\cdot 3^m\cdot 5$, which we know cannot happen. 

In all of the remaining cases we know that $k\geq 8$ and so $L_2$ is not split by $\mathbb H$.  Consequently in all of these cases $\lambda_2(L)$ is spinor regular.  When $(k,m)=(10,5)$ then $dL=2^{11}\cdot 3^5\cdot 5$, and we know from Lemma \ref{order} that $L_3$ is not split by $\mathbb H$ and has a splitting
\[
L_3\cong\begin{cases}
\lan a,3b,3^4c\ran\\
\lan a,3^2b, 3^3c\ran
\end{cases}
\]
for $a,b,c\in \Z_3^\times$, and hence $\lambda_3(L)$ is spinor regular with
\[
(\lambda_3(L))_3\cong\begin{cases}
\lan b,3a,3^3c\ran\\
\lan a,b, 3c\ran,
\end{cases}
\]  
but we can easily eliminate the first case since $2^{11}\cdot 3^4\cdot 5\not\in \SR$.  Therefore $d(\lambda_3(L))=2^{11}\cdot 3\cdot 5$ and hence corresponds to one of $[11,19,19,6,2,2]$ or $[7,23,31,-22,6,2]$ from \cite{JKS}.  Both of these forms admit local structure $\lan 1,1,3\ran $ at 3, and so from here we know that $L_3\cong \lan 1,3^2,3^3\ran$.  On the other hand, we know that $\lambda_2(L)$ is spinor regular with discriminant $2^{7}\cdot 3^5\cdot 5$ and hence corresponds to $[13,28,61,28,2,4]$ from \cite{JKS}, which locally at 3 has the structure $\lan \btu, 3^2\btu,3^3\ran$, contradicting what we already know of $L_3$. 

For the remaining cases, we know that $L_3$ must be split by $\mathbb H$ since $\lambda_3(L)$ cannot be spinor regular, and $\lambda_2(L)$ is spinor regular with 
\[
d(\lambda_2(L))=\begin{cases}
2^{9}\cdot 3^4\cdot 5 & \text{ for }(k,m)=(12,4)\\
2^{11}\cdot 3^2\cdot 5 & \text{ for }(k,m)=(14,2)\\
2^{11}\cdot 3^3\cdot 5 & \text{ for }(k,m)=(14,3).
\end{cases}
\]
Hence, the lattice $\lambda_2(L)$ corresponds to one of the forms [23,32,44,32,4,8], [17,32,32,32,8,16], [21,29,29,26,18,18] or [19,28,67,-4,10,4] from \cite{JKS}, none of which are split by $\mathbb H$ at 3. 

Next we will deal with cases of the form $2^k\cdot 3^m\cdot 5^2$.  For any $L$ with such a discriminant there are only two possibilities for the local structure of $L_5$, namely $\lan a,5b,5c\ran$ or $\lan a,b,5^2c\ran$ where $a,b,c\in \Z_5^\times$; we will deal with these cases one at a time.  

First, suppose that $L_5\cong \lan a,5b,5c\ran$, so $L_5$ is not split by $\mathbb H$, implying $\lambda_5(L)$ is spinor regular with $d(\lambda_5(L))=2^{k+1}\cdot 3^m\cdot 5$.  Then in view of the preceding paragraph, the only possibility is that $d(\lambda_5(L))=2^{11}\cdot 3^3\cdot 5$, since otherwise the discriminant is already within our search bounds or $\lambda_5(L)$ is not spinor regular.  Therefore $\lambda_5(L)$ must correspond to $[19,28,67,-4,10,4]$ which has local structures 
\[
(\lambda_5(L))_p\cong \begin{cases}
2\lan 3\ran \perp 2^5\lan3,7\ran & \text{ for }p=2\\
 \lan \btu\ran \perp 3\lan 1\ran \perp 3^2\lan \btu\ran& \text{ for }p=3,
\end{cases}
\]
and consequently the binary component of $L_2$ has a non-square discriminant.  It also follows that $L_3$ is not split by $\mathbb H$ and hence $\lambda_3(L)$ is spinor regular with $d(\lambda_3(L))=2^{11}\cdot 3^2\cdot 5^2$ and hence corresponds to one of $[13,37,132,-36,12,2]$ or $[33,48,52,48,12,24]$ from \cite{JKS}.  The latter of these has local structure $2\lan 1\ran\perp 2^5\lan 1,1\ran$ at 2, which cannot happen since we know the binary part of $L_2$ has a non-square discriminant.  So we may assume $\lambda_3(L)$ corresponds to the former, which has local structure $ \lan \btu\ran \perp 3\lan \btu,\btu\ran$ at 3, implying $L_3\cong  \lan \btu\ran \perp 3\lan \btu\ran \perp 3^2\lan \btu\ran$, which contradicts what we know of $L_3$ from above. Therefore, this case of $L_5\cong \lan a,5b,5c\ran$ cannot occur.

Supposing $L_5\cong\lan a,b,5^2c\ran$, it is possible that $L_5$ is either split by $\mathbb H$ or not.  Consider the case where $L_5$ is not split by $\mathbb H$ and hence $\lambda_5(L)$ is spinor regular with discriminant $2^k\cdot 3^m$ and hence $(k,m)$ must be one of $(8,5),(10,4),(10,3),(11,3),$
$(12,3),(12,2),(13,1)$ or $(13,2)$.  In all of these cases $L$ is not split by $\mathbb H$ at 2, and consequently $\lambda_2(L)$ is spinor regular with $d(\lambda_2(L))=2^{k+1-t}\cdot 3^m\cdot 5^2$ with $t\in \{1,4\}$.  With this, we can immediately rule out $(k,m)=(8,5),(13,1)$ since the corresponding discriminants are not in $\SR$. When $(k,m)=(10,3),(10,4),(12,2)$ then $\lambda_2(L)$ must correspond to one of [11,26,39,6,6,2], [11,35,39,-30,6,10], [25,25,28,-20,20,10], [8,17,92,4,8,8] or [9,41,41,-38,6,6] for $(k,m)=(10,3)$; or, [27,40,43,40,18,0] for $(k,m)=(10,4)$; or, [3,40,120,0,0,0], [12,33,40,0,0,12], [13,33,37,-18,2,6],$ $ [16,19,64,8,16,16] or [27,28,28,-24,12,12] for $(k,m)=(12,2)$.  However, in all of these cases $(\lambda_2(L))_5\cong \lan a,5b,5c \ran$ for $a,b,c\in \Z_5^\times$, a contradiction.  Having ruled out these cases, we can now easily eliminate $(k,m)=(11,3),(13,2)$.  In the last remaining case $(k,m)=(12,3)$, we have $d(\lambda_2(L))=2^{9}\cdot 3^3\cdot 5^2$ which corresponds to one of $[9,41,120,0,0,6],$ $[11,35,120,0,0,10],[25,48,52,48,20,0]$ or $[36,39,44,12,24,36]$ from \cite{JKS}, but again, in all of these cases $(\lambda_2(L))_5\cong \lan a,5b,5c \ran$ for $a,b,c\in \Z_5^\times$.

Hence we have ruled out all cases where $L_5$ is of the form $\lan a,b,5^2c\ran$ and not split by $\mathbb H$, and so we will turn to the case where $L_5$ is split by $\mathbb H$.  Since $L$ is spinor regular but not regular we may assume that either $L_2$ or $L_3$ is not split by $\mathbb H$, and consequently one of $\lambda_2(L)$ or $\lambda_3(L)$ is spinor regular and must in fact correspond to one of the forms in \cite{JKS}.  However, by an exhaustive computation using SageMath \cite{sage} we show that every form in \cite{JKS} with discriminant of interest has the structure $\lan a\ran \perp 5\lan b,c\ran$ locally at 5, where $a,b,c\in \Z_5^\times$, a contradiction.  Therefore we can easily rule out all of these cases. 

Next we consider $dL=2^{k+1}\cdot 3^m\cdot 5^3$.  In these cases we know from Lemma \ref{order} that $L_5\cong \lan a,5b,5^2c \ran $ with $a,b,c\in \Z_5^\times$.   Therefore, $L_5$ is not split by $\mathbb H$ and hence $\lambda_5(L)$ is spinor regular with $d(\lambda_5(L))=2^{k+1}\cdot 3^m\cdot 5^2$.  Consequently we only need to rule out cases where $(k,m)=(6,4),(6,5),(8,3),(8,4),(10,1),(10,2)$.  When $(k,m)=(6,4),(6,5)$, then $\lambda_5(L)$ corresponds to $[27,40,43,40,18,0]$,$[9,41,281,-38,6,6]$, respectively, neither of which are split by $\mathbb H$ at 2.  Therefore, $\lambda_2(L)$ is spinor regular, which cannot occur.  At this point we can immediately rule out $(k,m)=(8,3),(8,4)$ since these are both not split by $\mathbb H$ at 2, but again it is not possible that $\lambda_2(L)$ is spinor regular in these cases.  

For $(k,m)=(10,1)$ we know that $\lambda_5(L)$ corresponds to one of $[11,16,124,16,4,8]$ or $[16,31,44,4,16,8]$ which have local structures 
\[
(\lambda_5(L))_5\cong \begin{cases}
\lan \btu\ran\perp 5\lan 1,\btu\ran   \\
\lan \btu\ran\perp 5\lan1, \btu\ran 
\end{cases}
\]
and 
\[
(\lambda_5(L))_3\cong \begin{cases}
\lan 1,1\ran\perp 3\lan \btu\ran  \\
\lan \btu,\btu\ran\perp 3\lan \btu\ran,   
\end{cases}
\]
respectively.   On the other hand, we also know that $L_2$ is not split by $\mathbb H$, and so $\lambda_2(L)$ must correspond to one of $[21,21,21,2,18,18],[5,8,152,8,0,0]$ or $[5,24,56,24,0,0]$ which have local structures 
\[
(\lambda_2(L))_5\cong \begin{cases}
\lan \btu\ran\perp 5\lan 1\ran\perp 5^2\lan \btu\ran   \\
\lan 1\ran\perp 5\lan \btu\ran\perp 5^2\lan \btu\ran   \\
\lan \btu\ran\perp 5\lan \btu\ran\perp 5^2\lan 1\ran,   
\end{cases}
\]
and
\[
(\lambda_2(L))_3\cong \begin{cases}
\lan 1,\btu\ran\perp 3\lan \btu\ran  \\
\lan 1,1\ran\perp 3\lan 1\ran   \\
\lan 1,1\ran\perp 3\lan 1\ran,   
\end{cases}
\]
respectively.  We can rule out the first case, since the leading binary component of $L_3$ must have a square discriminant. We can also rule out the second case, given what we know about the structure of $(\lambda_5(L))_5$.  Therefore we can conclude that $\lambda_2(L)$ corresponds to $[5,24,56,24,0,0]$ and consequently 
\[
L_p\cong \begin{cases}
2\lan 5\ran\perp 2^5\mathbb A & \text{ when }p=2\\
 \lan a,a\ran\perp 3\lan a\ran & \text{ when }p=3\\
\lan \btu\cdot b\ran\perp 5\lan \btu\cdot b\ran\perp 5^2\lan b\ran & \text{ when }p=5\\
\lan 1,1,2^{11}\cdot 3\cdot 5^3\ran & \text{ otherwise}. 
\end{cases}
\] 
where $a\in \Z_3^\times$ and $b\in \Z_5^\times$. But now $\Z_p^\times\subseteq \theta(O^+(L_p))$ at every prime $p$ [cf. \cite[Lemma 1]{H75}] and so it follows from \cite[102:9]{OM} that $L$ was regular to begin with, a contradiction. 

Finally, when $(k,m)=(10,2)$ then the lattice $\lambda_2(L)$ corresponds to one of the forms [15,24,56,24,0,0], [7,52,52,-16,4,4] or [8,15,152,0,8,0] all of which have the $3$-adic local structure $\lan a,3b,3c\ran$ for $a,b,c\in \Z_3^\times$.  Consequently $\lambda_3(L)$ is spinor regular with $d(\lambda_3(L))=2^{11}\cdot 3\cdot 5^3$, which we know cannot occur.  
\end{proof}


\begin{lemma}\label{lemma237}
There are no elements of the form $2^k\cdot 3^m\cdot 7^n$ in $\mathcal S$ with $k,m,n>0$. 
\end{lemma}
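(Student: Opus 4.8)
The plan is to follow the template of Lemma \ref{lemma235}, organizing the argument by the exponent $n$ of $7$ and using the Watson transformations $\lambda_2,\lambda_3,\lambda_7$ to descend every candidate discriminant either into Jagy's search range or onto an explicit form of \cite{JKS}. First I would apply Lemma \ref{order} at $p=7$ to see that only $n\in\{1,2,3,5\}$ can occur, with local structures $L_7\cong\lan a,b,7c\ran$, $\lan a,7b,7c\ran$, $\lan a,7b,7^2c\ran$, and $\lan a,7^2b,7^3c\ran$ respectively; in particular $n=4$ is impossible. Since $r_7=2$, Corollary \ref{notsplit} shows that for every $n\geq 2$ the lattice $L_7$ is not split by $\mathbb H$, so $\lambda_7$ applies and, by Lemma \ref{descent}, lowers the exponent along the chains $5\to 1$ and $3\to 2\to 1$ while fixing $k$, $m$, and the $2$- and $3$-adic structures up to unit scaling.

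The heart of the matter is the case $n=1$. Here $L_7\cong\lan a,b,7c\ran$ has a binary unimodular block, so $\Z_7^\times\subseteq\theta(O^+(L_7))$ and $7$ behaves well; any failure of regularity must then originate at $2$ or $3$, and the factor of $7$ only restricts which forms of \cite{JKS} can appear in a descent (namely those with $7$ exactly dividing the discriminant). I would first invoke Lemma \ref{odd} at $p=3$ (with $d_0=2^k\cdot 7$) and Lemma \ref{even} (with $d_0=3^m\cdot 7$), together with the tally of $2\cdot3\cdot7$-discriminants in Table \ref{App:Tab2by3by7b}, to collapse the infinitely many $(k,m)$ to a finite set lying just beyond the search bound $575{,}000$; everything below that bound is excluded from $\mathcal S$ because no entry of Table \ref{Tab:1} is divisible by $21$. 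For each boundary pair I would assume a spinor regular, non-regular lattice $L$ with $dL=2^{k+1}3^m7$, read off the forced Jordan splittings at $2$ and $3$ from Lemma \ref{order} and Corollary \ref{notsplit}, apply $\lambda_2$ or $\lambda_3$, and match the result against the forms of \cite{JKS}; the contradiction, exactly as in Lemma \ref{lemma235}, comes from comparing a descent-invariant local datum of $L$ — the square class of the binary $2$-adic block, or the $3$-adic Jordan type — with the invariant imposed by the matching \cite{JKS} form.

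For $n\in\{2,3,5\}$ I would descend at $7$ first. Because $\lambda_7$ fixes the $2$- and $3$-adic data, the descended discriminants $2^k3^m7^{n'}$ are precisely those already under study at the smaller exponent, so after recording the forced diagonal $7$-adic type of $L$ the remaining work is a short finite list of $(k,m)$ — much shorter than for $n=1$, since the extra powers of $7$ force $2^k3^m$ to be small (for $n=5$ one has $2^k3^m<35$, so these cases lie almost entirely inside the search range). In each such case I would either descend to a discriminant provably absent from $\SR$, or, when the descent lands on a \cite{JKS} form, extract the implied $2$- or $3$-adic structure of $L$ and set it against the $7$-adic structure, just as in the $n=2$ subcases of Lemma \ref{lemma235}.

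The principal obstacle is not any one step but the bookkeeping coupled with an $\mathcal R$-membership subtlety: a bare descent into the search range does not close the argument, because $\lambda_p$ preserves spinor regularity yet can turn a non-regular lattice regular, so a descended discriminant may legitimately lie in $\mathcal R$. The delicate task is thus to choose, for each boundary $(k,m,n)$, a descent whose target is pinned to a short list of \cite{JKS} forms and then to exhibit a clash between a descent-invariant local datum of $L$ and the local structure forced by those forms; checking these $p$-adic structures across all relevant \cite{JKS} forms is where the SageMath \cite{sage} computation does the heavy lifting. A secondary point needing care is that, unlike the forms $2^k\cdot 7^3$ of Table \ref{Tab:1} where $7$ alone is the bad prime, here the presence of $3$ must be shown to obstruct spinor irregularity — which is precisely what the local-invariant clashes in the $n\geq 2$ cases accomplish.
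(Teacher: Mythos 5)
Your proposal follows essentially the same route as the paper's proof: the same case split over $n\in\{1,2,3,5\}$ via Lemma \ref{order}, the same use of Corollary \ref{notsplit} and Lemma \ref{descent} to descend at $7$ along the chains $5\to1$ and $3\to2\to1$, the same reduction of the $n=1$ case to finitely many boundary pairs via Lemmas \ref{odd} and \ref{even}, and the same closing mechanism of matching $\lambda_2$-, $\lambda_3$-, or $\lambda_7$-descents against explicit forms of \cite{JKS} and deriving contradictions from descent-invariant local data; you also correctly isolate the key subtlety that a descent landing in $\mathcal R$ is not itself a contradiction. The only slip is your estimate that the $n=5$ case lies ``almost entirely inside the search range'' (infinitely many $2^k3^m7^5$ exceed the bound, and the paper must still treat $1\leq k\leq 6$, $1\leq m\leq 3$ explicitly), but your stated mechanism handles those cases anyway.
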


\begin{proof}
From Lemma \ref{order} we know that $2^k\cdot 3^m\cdot 7^n$ can only appear in $\mathcal S$ when $n\leq 5$.  Suppose that $L$ is a spinor regular (but not regular) lattice with $dL=2^{k+1}\cdot 3^m\cdot 7^n$.  When $n=1$ then we only need to rule out discriminants with $(k,m)$ coming from among $(6,7),(6,8),(10,4),(12,3),(13,3),(14,2),(14,3)$, and then all remaining cases can be ruled out by Lemmas \ref{odd} and \ref{even}.  

If $(k,m)=(6,7),(6,8)$ then we may assume that $L_2$ is split by $\mathbb H$, or else $\lambda_2(L)$ is spinor regular with $d(\lambda_2(L))=2^{3}\cdot 3^m\cdot 7$ which we already know can't happen.  Moreover, we know from Corollary \ref{notsplit} that $L_3$ is not split by $\mathbb H$, and therefore $\lambda_3(L)$ is spinor regular with $d(\lambda_3(L))=2^{7}\cdot 3^m\cdot 7$ with $m=3,4$ and hence corresponds to one of $[4,31,31,26,4,4],[8,20,23,4,8,8]$, or $[13,28,28,-16,4,4]$ from \cite{JKS}.  But none of these are split by $\mathbb H$ at 2, and therefore this cannot happen.

Suppose that $(k,m)=(10,4)$, in which case we know that $L_2$ is not split by $\mathbb H$, and hence $\lambda_2(L)$ is spinor regular with $d(\lambda_2(L))=2^{7}\cdot 3^4\cdot 7$ and corresponds to $[13,28,28,-16,4,4]$ from \cite{JKS}.  Therefore, we have 
\[
(\lambda_2(L))_p\cong \begin{cases}
2\lan 5\ran \perp 2^3\mathbb A & \text{ for }p=2\\
\lan \btu\ran \perp 3^2\lan \btu,\btu\ran  & \text{ for }p=3\\
\lan 1,\btu\ran \perp 7\lan \btu\ran  & \text{ for }p=7\\
\lan 1,1,2^7\cdot 3^4\cdot 7\ran & \text{ otherwise},
\end{cases}
\]
and consequently $\Z_p^\times\subseteq \theta(O^+(L_p))$ for every $p$ and hence $L$ was regular to begin with, a contradiction.

If $(k,m)=(14,2)$ then we may assume that $L_3$ is split by $\mathbb H$, or else $\lambda_3(L)$ would be spinor regular with $d(\lambda_3(L))=2^{15}\cdot 3^{2-t}\cdot 7$ with $t\in \{1,2\}$, neither of which can occur in $\SR$. Moreover, we know that $L_2$ is not split by $\mathbb H$, and consequently $\lambda_2(L)$ is spinor regular with $d(\lambda_2(L))=2^{11}\cdot 3^2\cdot 7$.  Therefore, $\lambda_3(L)$ corresponds to $[16,16,23,8,8,0]$ from \cite{JKS} which has local structure 
\[
(\lambda_2(L))_3\cong \lan\btu,\btu \ran \perp 3\lan \btu\ran,
\]
contradicting the fact that $L_3$ is split by $\mathbb H$. 

For the remaining cases $(k,m)=(12,3), (13,3),(14,3)$ we know that $L_2$ is not split by $\mathbb H$, and we may assume that $L_3$ is split by $\mathbb H$ or else $\lambda_3(L)$ would be spinor regular with $d(\lambda_3(L))=2^{k}\cdot 3^{3-t}\cdot 7$ where $t\in \{1,2\}$, which we know cannot happen.  When $(k,m)=(12,3)$ then $\lambda_2(L)$ is spinor regular with $d(\lambda_2(L))=2^{9}\cdot 3^3\cdot 7$ and hence corresponds to one of $[15,16,55,16,6,0]$ or $[15,23,44,-20,12,6]$ from \cite{JKS}, however both of these have the 3-adic local structure $\lan a,3b,3^2 c\ran$ with $a,b,c\in \Z_3^\times$, a contradiction.  When $(m,k)=(13,3)$ then $\lambda_2(L)$ cannot be spinor regular so this case cannot occur.  When $(m,k)=(14,3)$, then $\lambda_2(L)$ is spinor regular with $d(\lambda_2(L))=2^{11}\cdot 3^3\cdot 7$ and hence corresponds to one of $[16,31,103,-10,8,8]$ from \cite{JKS}, which has the 3-adic local structure $\lan a,3b,3^2 c\ran$ with $a,b,c\in \Z_3^\times$, a contradiction.

For $2^k\cdot 3^m\cdot 7^2$, we know from Corollary \ref{notsplit} that any associated spinor regular form must not be split by $\mathbb H$ at 7, and hence $\lambda_7(L)$ is spinor regular with discriminant $2^k\cdot 3^m\cdot 7$.    Therefore the only possibility is that $\lambda_7(L)$ has discriminant $2^{10}\cdot 3^3\cdot 7$ and hence corresponds to the form $[16,31,103,-10,8,8]$.  However, locally at 7 this has the structure 
\[
(\lambda_7(L))_7\cong\lan 1,1\ran\perp 7\lan \btu\ran
\]
and consequently 
\[
L_7\cong \lan 1\ran \perp 7\lan 1,\btu\ran.
\]
On the other hand, we also know that $L_2$ is not split by $\mathbb H$ and $\lambda_2(L)$ is spinor regular with $d(\lambda_2(L))=2^7\cdot 3^3\cdot 7^2$ and hence corresponds to one of $[12,17,129,6,12,12]$ or $[12,28,73,28,12,0]$.  However, locally at 7 these both have the structure 
\[
(\lambda_2(L))_7\cong \lan \btu\ran\perp 7\lan 1,1\ran
\]
contradicting the fact that we know the binary part of $L_7$ has a non-square discriminant.  Therefore this case cannot occur.

For $2^k\cdot 3^m\cdot 7^3$, we know from Lemma \ref{order} that $L_7\cong \lan a,7b, 7^2c \ran$ where $a,b,c\in \Z_7^\times$ is not split by $\mathbb H$.  Consequently $\lambda_7(L)$ is spinor regular with discriminant $2^k\cdot 3^m\cdot 7^2$ and hence in this case we only need to rule out $(k,m)$ among $(6,3),(6,4),(8,2),(8,3),$ $(10,1),(10,2)$ or $(10,3)$.  If $(k,m)=(6,3),(6,4)$ then we may conclude that $L_2$ is split by $\mathbb H$, since otherwise $\lambda_2(L)$ would be spinor regular, which cannot happen.  But we know that $L_7$ is not split by $\mathbb H$, and in these cases $\lambda_7(L)$ has discriminant $2^7\cdot 3^m\cdot 7^2$ with $m=3,4$ and correspond to the forms $[12,17,129,6,12,12],[12,28,73,28,12,0]$ or $[19,27,136,-24,16,6]$, all of which contain an anisotropic plane.  Therefore $L_2$ cannot be split by $\mathbb H$ and hence these cases cannot occur.   

Now we can immediately rule out $(k,m)=(8,2),(8,3),(10,3)$ since in these cases $L_2$ is not split by $\mathbb H$ and consequently $\lambda_2(L)$ is spinor regular, but such spinor regular discriminants do not occur.  

If $(k,m)=(10,1),(10,2)$ then $L_2$ is not split by $\mathbb H$ and consequently $\lambda_2(L)$ is spinor regular $d(\lambda_2(L))=2^{7}\cdot 3^m\cdot 7^3$ for $m=1,2$.  Hence $\lambda_2(L)$ corresponds to one of $[20,31,31,-22,4,4]$ or $[28,37,60,12,0,28]$ from \cite{JKS}, and we have 
\[
(\lambda_2(L))_7\cong 
\begin{cases}
\lan\btu \ran\perp 7\lan \btu\ran \perp 7^2\lan \btu\ran & \text{ when }m=1 \\
\lan1 \ran\perp 7\lan 1\ran \perp 7^2\lan 1\ran & \text{ when }m=2.
\end{cases}
\]
Therefore, $L_7$ is not split by $\mathbb H$ and consequently $\lambda_7(L)$ is spinor regular with discriminant $2^{11}\cdot 3^m\cdot 7^2$ thus corresponding to one of $[17,41,68,-36,12,10]$ or $[16,43,172,4,16,8]$ which have the corresponding local structures
\[
(\lambda_7(L))_7\cong 
\begin{cases}
\lan\btu \ran\perp 7\lan 1,1\ran & \text{ when }m=1 \\
\lan1 \ran\perp 7\lan \btu,\btu\ran & \text{ when }m=2,
\end{cases}
\]
and hence 
\[
L_7\cong \begin{cases}
\lan\btu \ran\perp 7\lan 1\ran \perp7^2\lan1\ran & \text{ when }m=1 \\
\lan1 \ran\perp 7\lan \btu\ran \perp 7^2\lan \btu\ran & \text{ when }m=2,
\end{cases}
\]
a contradiction, given what we already determined about the units appearing in the splitting of $L_7$. 

Finally, we must deal with the case of $2^k\cdot 3^m\cdot 7^5$.  Any lattice $L$ associated to a spinor regular form of discriminant $2^k\cdot 3^m\cdot 7^5$ will necessarily be split by $\mathbb H$ at 7 by Corollary \ref{notsplit}.  Moreover, we know from Lemma \ref{order} that such a lattice will have a local structure $L_7\cong \lan a,7^2b,7^3c\ran$ and consequently $\lambda_7(L)$ is spinor regular and $(\lambda_7(L))_7\cong\lan a,b,7c\ran $.  Consequently we can immediately rule out any $(k,m)$ where $2^k\cdot 3^m\cdot 7\not\in \SR$.  The strategy now will be to rule out all $2^k\cdot 3^m\cdot 7^5$ where $1\leq k\leq 6$ and $1\leq m\leq 3$.  Beyond these values, we know that neither $L_2$ nor $L_3$ is split by $\mathbb H$, and consequently we can systematically descend to one of these values.  We can immediately rule out $(k,m)=(1,1),(1,2),(2,1),(3,1)$ since these are below our search bound and not in $\mathcal S$.  We can also rule out $k=5$ and $m=5$, since in these cases $2^k\cdot 3^m\cdot 7\not\in \SR$.  Using SageMath \cite{sage} and the forms presented in \cite{JKS} we find none of the $(\lambda_7(L))_2$ are split by $\mathbb H$, and therefore we may conclude that $L_2$ is never split by $\mathbb H$.  This allows us to eliminate $(k,m)=(4,1),(6,1)$ since under $\lambda_2$ these must descend to forms which are spinor regular, but this is impossible.  Next we use SageMath \cite{sage} to examine the local structure of $\lambda_7(L)$ for each of the remaining lattices, and we find that in all cases $(\lambda_7(L))_3$ is not split by $\mathbb H$.  Whenever $L_3$ is not split by $\mathbb H$, we know that $\lambda_3(L)$ should descend to a spinor regular form with discriminant $2^k\cdot 3^{m-t}\cdot 7^5$ where $t=1,2,4$, but we know this cannot happen.  
\end{proof}


\begin{lemma}\label{lemma257}
There are no elements of the form $2^k\cdot 5^m\cdot 7^n$ in $\mathcal S$ with $k,m,n>0$. 
\end{lemma}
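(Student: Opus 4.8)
The plan is to reprise the template of Lemmas \ref{lemma235} and \ref{lemma237} for the admissible pair $(5,7)$ from Lemma \ref{primepair}. First I would apply Lemma \ref{order} to bound the odd exponents: the power of $5$ forces $m\leq 3$, and the power of $7$ forces $n\in\{1,2,3,5\}$. This leaves only finitely many local shapes at $5$ and $7$, and the task becomes the elimination of each family $2^k\cdot 5^m\cdot 7^n$.

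The next step is to collapse the exponent $n$ at the prime $7$. Since $r_7=2$, Corollary \ref{notsplit} guarantees that $L_7$ is not split by $\mathbb H$ whenever $n\geq 2$, so Lemma \ref{descent} applies and $\lambda_7$ lowers the power of $7$ while preserving spinor regularity: the case $n=5$ (where $L_7\cong\lan a,7^2b,7^3c\ran$) descends to $n=1$, the case $n=3$ (where $L_7\cong\lan a,7b,7^2c\ran$) descends through $n=2$ to $n=1$, and the case $n=2$ (where $L_7\cong\lan a,7b,7c\ran$) descends directly to $n=1$. In the same way, since $r_5=3$, any lattice with $m=3$, or with $m=2$ and $L_5\cong\lan a,5b,5c\ran$, fails to be split by $\mathbb H$ at $5$, so repeated use of $\lambda_5$ reduces the power of $5$ toward $m=1$. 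Thus every family is reduced, by sequences of $\lambda_5$ and $\lambda_7$ descents, to the base case $n=1$, within which the power of $5$ is further lowered whenever $L_5$ is not split by $\mathbb H$, leaving the split configuration $L_5\cong\lan a,b,5^2c\ran$ to be treated separately.

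The principal work is the base case $n=1$, i.e.\ discriminants $2^k\cdot 5^m\cdot 7$ with $1\leq m\leq 3$. Once four consecutive admissible powers of $2$ (or of $5$) have been cleared, Lemmas \ref{even} and \ref{odd} dispose of the infinite tails, so only a finite list of boundary pairs $(k,m)$ lying just past Jagy's bound of $575{,}000$ remains. For each such pair I would invoke Corollary \ref{notsplit} (with $r_2=8$) to conclude that $L_2$ is not split by $\mathbb H$, descend by $\lambda_2$ via Lemma \ref{descent2} to a spinor regular lattice of smaller $2$-power, and match it against the forms in \cite{JKS}; reading off the $5$-adic and $7$-adic Jordan structure of that form---cross-checked, where possible, against a parallel $\lambda_5$ or $\lambda_7$ descent---forces a splitting of $L_5$ or $L_7$ incompatible with the datum recovered from the other branch, exactly as in the computations of Lemma \ref{lemma235}.

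The split residual case $L_5\cong\lan a,b,5^2c\ran$ is handled as there: since $L$ is spinor regular but not regular, one of $L_2,L_7$ must fail to be split by $\mathbb H$, so the corresponding $\lambda_2(L)$ or $\lambda_7(L)$ is spinor regular and coincides with a \cite{JKS} form, and a SageMath \cite{sage} sweep of the $5$-adic structure of all candidates of the relevant discriminant produces the contradiction. The delicate point throughout is the bookkeeping of which of $L_2,L_5,L_7$ is split by $\mathbb H$ in each boundary case, since two descent branches must be played off against one another: whenever both branches would be forced spinor regular at once, one lands on a discriminant already excluded from $\SR$, and whenever a branch is blocked by a hyperbolic split, the residual splitting fills out $\theta(O^+(L_p))$ at every prime and the regularity criterion \cite[102:9]{OM} closes the case by contradicting $L\in\mathcal S$.
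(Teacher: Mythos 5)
Your toolkit is the paper's---Lemma \ref{order} to pin down the local shapes, Corollary \ref{notsplit} with Lemmas \ref{descent} and \ref{descent2} for descents, Lemmas \ref{odd} and \ref{even} for the infinite tails, and matching against \cite{JKS}---but the architecture of your plan has a genuine gap, and you have located the difficulty in the wrong place. Your plan is: collapse $n$ to $1$ by $\lambda_7$, then do the ``principal work'' in the base case $n=1$. Two problems. First, the base case requires no work at all: for each $m\leq 3$ there are four consecutive powers of $2$ below Jagy's bound with $2^k\cdot 5^m\cdot 7\notin\SR$ (see Table \ref{App:Tab2by5by7}), so Lemmas \ref{odd} and \ref{even} dispose of every $2^k\cdot 5^m\cdot 7$ outright; there are no boundary pairs past $575{,}000$ to treat, no $\lambda_2$ descents, no \cite{JKS} matching there. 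Second, and more seriously, the collapse of $n$ is not automatically a contradiction. Knowing that no discriminant $2^k\cdot 5^m\cdot 7$ lies in $\mathcal S$ does not rule out an ancestor with $n\geq 2$: the descended lattice $\lambda_7(L)$ is spinor regular, and if its discriminant lies in $\mathcal R$ it may simply \emph{be} one of the regular lattices of \cite{JKS}, which contradicts nothing. This is exactly what happens---and it is essentially the entire content of the paper's proof---for $n=5$ with $(k,m)=(1,2),(2,2),(6,1),(6,2)$: there $\lambda_7(L)$ must coincide with $[3,3,10,0,0,1]$, $[5,6,6,2,0,0]$, $[1,8,72,8,0,0]$, or $[5,24,24,8,0,0]$, and one must read off the $5$-adic (resp.\ $2$-adic) structure of those specific forms to conclude that $L_5$ (resp.\ $L_2$) is not split by $\mathbb H$, launch a second descent, and land on a discriminant outside $\SR$; for $(6,2)$ this even requires that $(2,2)$ be eliminated first, so the internal ordering of cases matters.

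Your closing sentence (``whenever both branches would be forced spinor regular at once, one lands on a discriminant already excluded from $\SR$'') does name the correct mechanism, but in your outline it is an unverified assertion attached to the wrong cases. A proof assembled from your plan would sail through the (empty) base case, declare the $n$-collapse routine, and never confront the four discriminants where the argument can actually fail. The repair is to invert the order, as the paper does: settle $n=1$ by Lemmas \ref{odd} and \ref{even}; then $n=2$, where only $(k,m)=(9,2),(10,2)$ survive and their $\lambda_7$-images fall outside $\SR$; then $n=3$ by reduction to the now-empty $n=2$ level; and finally $n=5$, carrying out the explicit local-structure comparisons for the four cases whose $\lambda_7$-image lands in $\mathcal R$.
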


\begin{proof}
From Lemma \ref{order} we know that $2^k\cdot 5^m\cdot 7^n$ can only appear in $\mathcal S$ provided that $m\leq 3$ and $n\leq 5$.  Suppose that $L$ is a spinor regular (but not regular) form with $dL=2^{k+1}\cdot 5^m\cdot 7^n$.  All discriminants of the form $2^k\cdot 5^m\cdot 7$ can be immediately ruled out by Lemmas \ref{odd} and \ref{even}.  

When $dL=2^{k+1}\cdot 5^m\cdot 7^2$ we only need to rule out cases where $(k,m)=(9,2),(10,2)$ and then all remaining cases follow by applying Lemmas \ref{odd} and \ref{even}.  In these cases we know from Lemma \ref{order} that $L_7\cong \lan a,7b,7c\ran$ with $a,b,c\in \Z_7^\times$ and consequently $\lambda_7(L)$ is spinor regular and $(\lambda_7(L))_7\cong \lan b,7a,c\ran $.  But we already know that such a lattice cannot be spinor regular when $(k,m)=(9,2),(10,2)$.

If $dL=2^{k+1}\cdot 5^m\cdot 7^3$ we know that $L_7\cong \lan a,7b,7^2c\ran $ with $a,b,c\in \Z_7^\times$, and hence $\lambda_7(L)$ is spinor regular and $(\lambda_7(L))_7\cong \lan b,7a,7c\ran $ is spinor regular.  However, we have already ruled out all relevant cases in the preceding paragraph. 

When $dL=2^{k+1}\cdot 5^m\cdot 7^5$, then $L_7\cong \lan a,7^2b,7^3c\ran $ for $a,b,c\in \Z_7^\times$ and consequently $\lambda_7(L)$ is spinor regular and $(\lambda_7(L))_7\cong \lan a,b,7c\ran$.  Therefore in this case we only need to eliminate $(k,m)=(1,2),(2,2),(6,1),(6,2)$.  For $(k,m)=(1,2),(2,2)$, $\lambda_7(L)$ must correspond to one of $[3,3,10,0,0,1]$ and $[5,6,6,2,0,0]$ from \cite{JKS}.  But for each of these forms the associated lattice has the local structure 
\[
(\lambda_7(L))_5\cong \lan a\ran \perp 5\lan b,c\ran 
\]
for $a,b\in \Z_5^\times$, and consequently $L_5$ is not split by $\mathbb H$ so $\lambda_5(L)$ is spinor regular with
\[
d(\lambda_5(L))=\begin{cases}
2\cdot3\cdot 5 & \text{ when }m=1\\
2\cdot3^2\cdot 5 & \text{ when }m=2
\end{cases} 
\]
neither of which can occur. 

If $(k,m)=(6,1),(6,2)$, then $\lambda_7(L)$ corresponds to the forms $[1,8,72,8,0,0]$ and $[5,24,24,8,0,0]$, respectively.  But for each of these forms the associated lattice has the local structure 
\[
(\lambda_7(L))_2\cong 2\lan a\ran \perp 2^3\mathbb A,
\]
for $a\in \Z_2^\times$.  But this means that $L_2$ is not split by $\mathbb H$ and hence $\lambda_2(L)$ should be spinor regular with discriminant $2^2\cdot 5^m\cdot 7^5$ for $m=1,2$, which we know does not happen. 
\end{proof}

Proofs showing that $2^k\cdot 3^m\cdot 11^n$, $2^k\cdot 3^m\cdot 13^n$, and $2^k\cdot 11^m\cdot 13^n$ are never in $\mathcal S$ follow the same arguments as the proofs of Lemmas \ref{lemma235}, \ref{lemma237} and \ref{lemma257}.  In these cases it is actually slightly easier to determine that no such elements exist in $\mathcal S$, because the relative scarcity of elements in $\mathcal R$ of the form $2^k\cdot 3^m\cdot 11^n$, $2^k\cdot 3^m\cdot 13^n$, and $2^k\cdot 11^m\cdot 13^n$ leaves little to be eliminated.  Combining all of the lemmas of this section we have shown that the discriminants appearing in Table 1 are precisely the spinor regular discriminants and therefore Table 1 is complete, thus proving Theorem \ref{main}. 


\section{Appendix}
In the tables below``$r$" denotes a product that corresponds to some $disc(f)\in \mathcal{R}$,  ``$s$" denotes a product that corresponds to some $disc(f)\in \mathcal{S}$, ``$*$" denotes a product that is larger than 575,000, ``x" denotes a product that is already ruled out by Lemma \ref{order}, and ``$-$" denotes a product that is smaller than 575,000 but appears in neither $\mathcal{S}$ nor $\mathcal{R}$.

\begin{table}[h]
\begin{tabular}{c|ccc}
\hline
$3^k\cdot 5^m$&5&$5^2$&$5^3$\\
\hline
$3$      &r&r&r\\
$3^2$  &r&r&r\\
$3^3$  &r&r&r\\
$3^4$  &r&r&r\\
$3^5$  &-&-&-\\
$3^6$  &-&-&-\\
$3^7$  &-&-&-\\
$3^8$  &-&-&*\\
$3^9$  &-&-&*\\
\hline
\end{tabular}
\caption{}\label{App:Tab3by5}
\end{table}

\begin{table}[h]
\begin{tabular}{c|ccccc}
\hline
$3^k\cdot 7^m$&7&$7^2$&$7^3$&$7^4$&$7^5$\\
\hline
$3$  &r&r&-&x&-\\
$3^2$  &r&r&-&x&-\\
$3^3$ &r&r&-&x&-\\
$3^4$  &-&-&-&x&*\\
$3^5$  &-&-&-&x&*\\
$3^6$  &-&-&-&x&*\\
$3^7$  &-&-&*&x&*\\
\hline
\end{tabular}
\caption{}\label{App:Tab3by7}
\end{table}

\begin{table}[h]
\begin{tabular}{c|ccccccccc}
\hline
$2^k\cdot 3^m$&3&$3^2$&$3^3$&$3^4$&$3^5$&$3^6$&$3^7$&$3^8$&$3^9$\\
\hline
$2$  &r&r&r&r&r &-&-&-&-\\
$2^2$  &r&r&r,s&r,s&r&-&-&-&-\\
$2^3$ &r&r&r&r&-&-&-&-&-\\
$2^4$  &r&r&r,s&r&r,s&-&-&-&-\\
$2^5$  &r&r&r&r&-&-&-&-&*\\
$2^6$  &r&r&r,s&r&r&-&-&-&*\\
$2^7$  &r&r&r&-&-&-&-&*&*\\
$2^8$  &r&r&r,s&r&r,s&-&-&*&*\\
$2^9$  &r&r&r&-&-&-&*&*&*\\
$2^{10}$  &r&r&r,s&r&-&*&*&*&*\\
$2^{11}$  &r&r&r&-&-&*&*&*&*\\
$2^{12}$  &r&r&r&-&*&*&*&*&*\\
$2^{13}$  &r&r&-&*&*&*&*&*&*\\
$2^{14}$  &-&-&-&*&*&*&*&*&*\\
$2^{15}$  &-&-&*&*&*&*&*&*&*\\
$2^{16}$  &-&*&*&*&*&*&*&*&*\\
$2^{17}$  &-&*&*&*&*&*&*&*&*\\
\hline
\end{tabular}
\caption{}\label{App:Tab2by3}
\end{table}

\begin{table}[h]
\begin{tabular}{c|ccccccccc}
\hline
$2^k\cdot 3^m\cdot 5$&3&$3^2$&$3^3$&$3^4$&$3^5$&$3^6$&$3^7$&$3^8$&$3^9$\\
\hline
$2$  &r&r&r&r&r&-&-&-&-\\          
$2^2$  &r&r&r&r&r&-&-&-&-\\      
$2^3$ &r&r&r&-&-&-&-&-&*\\       
$2^4$  &r&r&r&r&-&-&-&-&*\\
$2^5$  &r&r&r&-&-&-&-&*&*\\
$2^6$  &r&r&r&r&r&-&*&*&*\\
$2^7$  &-&-&-&-&-&-&*&*&*\\
$2^8$  &r&r&r&r&-&*&*&*&*\\
$2^9$  &-&-&-&-&*&*&*&*&*\\
$2^{10}$  &r&r&r&-&*&*&*&*&*\\
$2^{11}$  &-&-&-&*&*&*&*&*&*\\
$2^{12}$  &-&-&-&*&*&*&*&*&*\\
$2^{13}$  &-&-&*&*&*&*&*&*&*\\
$2^{14}$  &-&*&*&*&*&*&*&*&*\\
\hline
$2^k\cdot 3^m\cdot 5^2$&3&$3^2$&$3^3$&$3^4$&$3^5$&$3^6$&$3^7$&$3^8$&$3^9$\\
\hline
$2$  &r&r&r&r&r&-&-&-&* \\
$2^2$  &r&r&r&r&r&-&-&*&*\\
$2^3$ &r&r&r&-&-&-&-&*&*\\
$2^4$  &r&r&r&r&-&-&*&*&*\\
$2^5$  &r&r&r&-&-&*&*&*&*\\
$2^6$  &r&r&r&r&r&*&*&*&*\\
$2^7$  &-&-&-&-&*&*&*&*&*\\
$2^8$  &r&r&r&r&*&*&*&*&*\\
$2^9$  &-&-&-&*&*&*&*&*&*\\
$2^{10}$  &r&r&r*&*&*&*&*&*&*\\
$2^{11}$  &-&-&*&*&*&*&*&*&*\\
$2^{12}$  &-&*&*&*&*&*&*&*&*\\
$2^{13}$  &*&*&*&*&*&*&*&*&*\\
$2^{14}$  &*&*&*&*&*&*&*&*&*\\
\hline
$2^k\cdot 3^m\cdot 5^3$&3&$3^2$&$3^3$&$3^4$&$3^5$&$3^6$&$3^7$&$3^8$&$3^9$\\
\hline
$2$  &r&r&r&-&- &-&-&*&*\\
$2^2$  &r&r&r&-&-&-&*&*&*\\
$2^3$ &-&-&-&-&-&*&*&*&*\\
$2^4$  &r&r&-&-&-&*&*&*&*\\
$2^5$  &-&-&-&-&-&*&*&*&*\\
$2^6$  &r&r&r&*&*&*&*&*&*\\
$2^7$  &-&-&-&*&*&*&*&*&*\\
$2^8$  &r&r&*&*&*&*&*&*&*\\
$2^9$  &-&-&-&*&*&*&*&*&*\\
$2^{10}$  &*&*&*&*&*&*&*&*&*\\
$2^{11}$  &*&*&*&*&*&*&*&*&*\\
$2^{12}$  &*&*&*&*&*&*&*&*&*\\
\hline
\end{tabular}
\caption{}\label{App:Tab2by3by5}
\end{table}

\begin{table}[h]
\begin{tabular}{c|ccccccccc}
\hline
$2^k\cdot 3^m\cdot 7$ &  $3$ & $3^{2}$ & $3^{3}$ & $3^{4}$ & $3^{5}$ & $3^{6}$ & $3^{7}$ & $3^{8}$ & $3^{9}$ \\
\hline
$2$  &r & r & r& r & - & - & - & - & -\\
$2^2$ &r & r & r& r & - & - & - & - & -\\
$2^3$ &r & r & r& r & - & - & - & - & *\\
$2^4$ &r & r & r& - & - & - & - & * & *\\
$2^5$ &- & - & -& - & - & - & - & * & *\\
$2^6$ &r & r & r & r & - & - & * & * & *\\
$2^{7}$ &- & - & -& - & - & * & * & * & *\\
$2^{8}$ &r & r & r & - & - & * & * & * & * \\
$2^{9}$ &- & - & -& - & * & * & * & * & * \\
$2^{10}$ & r & r & r & * & * & * & * & * & * \\
$2^{11}$ &- & - & - & * & * & * & * & * & * \\
$2^{12}$ &- & - & * & * & * & * & * & * & * \\
$2^{13}$ &- & - & * & * & * & * & * & * & * \\
$2^{14}$ &- & * & * & * & * & * & * & * & * \\
\hline
$2^k\cdot 3^m\cdot 7^2$ &$3$ & $3^{2}$ & $3^{3}$ & $3^{4}$ & $3^{5}$ & $3^{6}$ & $3^{7}$ & $3^{8}$ & $3^{9}$ \\
\hline
$2$  &r & r & r& r & - & - & - & * & *\\
$2^2$ &r & r & r& r & - & - & - & * & *\\
$2^3$ &r & r & r& - & - & - & * & * & *\\
$2^4$ &r & r & r& - & - & - & * & * & *\\
$2^5$ &- & - & -& - & - & * & * & * & *\\
$2^6$ &r & r & r & r & * & * & * & * & *\\
$2^{7}$ &- & - & -& - & * & * & * & * & *\\
$2^{8}$ &r & r & r & * & * & * & * & * & *\\
$2^{9}$ &- & - & *& * & * & * & * & * & *\\
$2^{10}$ & r & r & r *& * & * & * & * & * & *\\
$2^{11}$ &- & * & * & * & * & * & * & * & *\\
$2^{12}$ &* & * & * & * & * & * & * & * & *\\
$2^{13}$ &* & * & * & * & * & * & * & * & *\\
$2^{14}$ &* & * & * & * & * & * & * & * & *\\
\hline
$2^k\cdot 3^m\cdot 7^3$&3&$3^2$&$3^3$&$3^4$&$3^5$&$3^6$&$3^7$&$3^8$&$3^9$\\
\hline
$2$  &r&r&-&-&- &-&*&*&*\\
$2^2$  &r&r&-&-&-&-&*&*&*\\
$2^3$ &r&r&-&-&*&*&*&*&*\\
$2^4$  &-&-&-&-&*&*&*&*&*\\
$2^5$  &-&-&-&*&*&*&*&*&*\\
$2^6$  &r&r&*&*&*&*&*&*&*\\
$2^7$  &-&-&*&*&*&*&*&*&*\\
$2^8$  &-&*&*&*&*&*&*&*&*\\
$2^9$  &-&*&*&*&*&*&*&*&*\\
$2^{10}$  &*&*&*&*&*&*&*&*&*\\
$2^{11}$  &*&*&*&*&*&*&*&*&*\\
$2^{12}$  &*&*&*&*&*&*&*&*&*\\
\hline
$2^k\cdot 3^m\cdot 7^5$ &$3$ & $3^{2}$ & $3^{3}$ & $3^{4}$ & $3^{5}$ & $3^{6}$ & $3^{7}$ & $3^{8}$ & $3^{9}$\\
\hline
$2$  &-&-&*&*&* &*&*&*&*\\
$2^2$  &-&*&*&*&*&*&*&*&*\\
$2^3$ &-&*&*&*&*&*&*&*&*\\
$2^4$  &*&*&*&*&*&*&*&*&*\\
$2^5$  &*&*&*&*&*&*&*&*&*\\
$2^6$  &*&*&*&*&*&*&*&*&*\\
$2^7$  &*&*&*&*&*&*&*&*&*\\
$2^8$  &*&*&*&*&*&*&*&*&*\\
$2^9$  &*&*&*&*&*&*&*&*&*\\
$2^{10}$  &*&*&*&*&*&*&*&*&*\\
$2^{11}$  &*&*&*&*&*&*&*&*&*\\
$2^{12}$  &*&*&*&*&*&*&*&*&*\\
\hline
\end{tabular}
\caption{}\label{App:Tab2by3by7b}
\end{table}

\begin{table}[h]
\begin{tabular}{c|ccc||c|ccc}
\hline
$2^k\cdot 5^m\cdot 7$&5&$5^2$&$5^3$&$2^k\cdot 5^m\cdot 7^2$&5&$5^2$&$5^3$\\
\hline
$2$  &r&r&-&               $2$  &r&r&-\\
$2^2$  &r&r&-&         $2^2$  &-&r&-\\
$2^3$  &-&-&-&         $2^3$  &-&-&-\\
$2^4$  &-&-&-&         $2^4$  &-&-&-\\
$2^5$  & -&-&-&        $2^5$  & -&-&-\\
$2^6$  &r&r&-&         $2^6$  &r&r&-\\
$2^7$  &-&-&-&         $2^7$  &-&-&*\\
$2^8$  &-&-&-&         $2^8$  &-&-&*\\
$2^9$  &-&-&-&         $2^9$  &-&*&*\\
$2^{10}$  &-&-&*& $2^{10}$  &-&*&*\\
$2^{11}$  &-&-&*& $2^{11}$  &-&*&*\\
$2^{12}$  &-&*&*&$2^{12}$  &*&*&*\\
\hline
$2^k\cdot 5^m\cdot 7^3$&5&$5^2$&$5^3$&$2^k\cdot 5^m\cdot 7^5$&5&$5^2$&$5^3$  \\
\hline
$2$  &-&-&-&$2$  &-&*&*\\
$2^2$  &-&-&-&$2^2$  &-&*&*\\
$2^3$  &-&-&-&$2^3$  &*&*&*\\
$2^4$  &-&-&*&$2^4$  &*&*&*\\
$2^5$  & -&-&*&$2^5$  &*&*&*\\
$2^6$  &-&-&*&$2^6$  &*&*&*\\
$2^7$  &-&*&*&$2^7$  &*&*&*\\
$2^8$  &-&*&*&$2^8$  &*&*&*\\
$2^9$  &*&*&*&$2^9$  &*&*&*\\
$2^{10}$  &*&*&*&$2^{10}$  &*&*&*\\
$2^{11}$  &*&*&*&$2^{11}$  &*&*&*\\
$2^{12}$  &*&*&*&$2^{12}$  &*&*&*\\
\hline
\end{tabular}
\caption{}\label{App:Tab2by5by7}
\end{table}


\clearpage

\section{Acknowledgments}
The authors wish to thank Professor Kaplansky, in memoriam, for his encouragement to undertake this project, and Will Jagy for sharing frequent updates of the results of his ongoing computer search. The second author is grateful to the hospitality shown by Southern Illinois University Carbondale during her visits in which this project was completed.

\end{document}